\newtheorem{thm}{Theorem}
\newtheorem{lem}[thm]{Lemma}
\newtheorem{prop}[thm]{Proposition}
\newtheorem{coro}[thm]{Corollary}
\newtheorem{conj}[thm]{Conjecture}
\theoremstyle{definition}
\newtheorem{eg}[thm]{Example}
\theoremstyle{definition}
\newtheorem{defn}[thm]{Definition}
\numberwithin{thm}{section}
\newcommand{\CC}{\mathbb{C}}
\newcommand{\RR}{\mathbb{R}}
\newcommand{\QQ}{\mathbb{Q}}
\newcommand{\LL}{\mathbb{L}}
\newcommand{\TT}{\mathbb{T}}
\newcommand{\PP}{\mathbb{P}}
\newcommand{\lL}{\mathscr{L}}
\newcommand{\mM}{\mathscr{M}}
\newcommand{\qQ}{\mathscr{Q}}
\newcommand{\eEE}{\mathcal{E}}
\newcommand{\cCC}{\mathcal{C}}
\newcommand{\fFF}{\mathcal{F}}
\newcommand{\hHH}{\mathcal{H}}
\newcommand{\kKK}{\mathcal{K}}
\newcommand{\oOO}{\mathcal{O}}
\newcommand{\gGG}{\mathcal{G}}
\newcommand{\mMM}{\mathcal{M}}
\newcommand{\pPP}{\mathcal{P}}
\newcommand{\cofib}[1]{\mathsf{Q}(#1)}
\newcommand{\gl}[1]{\mathfrak{gl}_{#1}}
\newcommand{\Hom}{\text{Hom}}
\newcommand{\RHom}{\mathbb{R}\text{Hom}}
\newcommand{\cdga}{\text{cdga}^{\leq 0}}
\newcommand{\dquot}{\textbf{Quot}}
\newcommand{\map}[1]{\text{Map}_{\text{#1}}}
\newcommand{\dspec}{\textbf{Spec}}
\newcommand{\spec}{\text{Spec}}
\newcommand{\rperf}{\textbf{Perf}}
\newcommand{\rpsperf}{\textbf{PsPerf}}
\newcommand{\rmap}{\textbf{Map}}
\newcommand{\infgpd}{\infty\text{-Grpd}}
\newcommand{\rmor}{\textbf{Mor}}
\newcommand{\rpsmor}{\textbf{PsMor}}
\newcommand{\dgcat}{\text{dgCat}}
\newcommand{\supp}{\mbox{supp}}
\title{Derived quot schemes}
\author{Nachiketa Adhikari}
\address{Department of Mathematics, University of Illinois at Urbana-Champaign, Urbana IL 61801, USA}
\email{na17@illinois.edu}
\thanks{This research is supported in part by NSF grants DMS-1802242 and DMS-2201203.}
\begin{document}
	\begin{abstract}
		We define a derived enhancement of the classical quot functor of quotients associated to a coherent sheaf on a nonsingular quasiprojective variety. We prove its representability and show that it has the expected tangent complex. The derived quot scheme of points can be covered by affine charts obtained as spectra of commutative graded differential algebras (cdgas) and we compute an example. As a demonstration of the usefulness of this presentation, we write down an explicit shifted form on this example and conjecture that it agrees with the shifted symplectic structure on the derived stack of perfect complexes constructed by Pantev-Toën-Vaquié-Vezzosi (\cite{ptvv}) and Brav-Dyckerhoff (\cite{bravdyck}).
	\end{abstract}
	\maketitle
	\section{Introduction}
	A (classical) moduli problem over $k = \CC$ is a functor
	\[ \{\text{commutative $k$-algebras}\} \to \{\text{sets}\}. \]
	A \textit{derived moduli problem} is a functor
	\[ \{\text{commutative graded differential $k$-algebras (cdgas)}\} \to \{\text{spaces} \}. \]
	A derived moduli problem $\bf F$ is a \textit{derived enhancement} of a classical moduli problem $F$ when the space that $\bf F$ assigns to an arbitrary commutative $k$-algebra $A$ (thought of as a cdga) is homotopy equivalent to $F(A)$, where the latter has been endowed with the discrete topology.
	
	In the framework developed in \cite{hag2}, \textit{affine derived schemes} form the opposite category of cdgas; \textit{derived schemes} are constructed by gluing affine derived schemes. A derived scheme has a global tangent complex. Given a representable classical moduli problem $F$, it is possible, in many cases, to construct a derived enhancement $\bf F$, representable by a derived scheme, such that
	\begin{enumerate}
		\item the tangent space at a $k$-point of $F$ is a cohomology group of the tangent complex of $\bf F$ and
		\item the higher cohomology groups of the tangent complex naturally encode obstructions to smoothness of $F$.
	\end{enumerate}
	In such a situation, the ranks of the tangent spaces of $k$-points of $F$ may jump at singularities, but the tangent complex of $\bf F$ is a global invariant that can be represented by a perfect complex with terms having constant rank. This is a manifestation of the ``hidden smoothness'' philosophy of derived geometry.
	
	Let $X$ be a smooth quasiprojective variety and $\fFF$ a coherent sheaf on $X$. Recall that the classical quot functor $Quot_{\fFF}$ parameterizes quotient sheaves of $\fFF$. In \S 3, we define the derived quot functor $\dquot_\fFF$ as a derived moduli problem. It classifies quotient complexes of sheaves (appropriately defined). Here is our main result about the derived quot functor:
	\begin{thm}
		\begin{enumerate}
			\item The derived quot functor $\dquot_\fFF$ is representable by a derived scheme.
			\item There is a natural map
			\[  Quot_\fFF \to t_0(\dquot_\fFF) \]
			which is an equivalence of ordinary schemes. Here $t_0(\bf G)$, for a derived moduli functor $\bf G$, is its restriction to commutative $k$-algebras. In other words, the derived quot functor is a derived enhancement of the classical quot functor.
			\item The tangent complex of $\dquot_\fFF$ at a $k$-point corresponding to a quotient $\fFF \to \gGG$ with kernel $\kKK \to \fFF$ is naturally equivalent to $\RHom_X(\kKK, \gGG)$.
		\end{enumerate}\label{maintheorem}
	\end{thm}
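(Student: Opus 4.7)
The plan is to establish the three parts in turn, handling parts (1) and (2) together via the derived Artin representability criteria of \cite{hag2}. I would package $\dquot_\fFF$ as a substack of a stack of morphisms out of $\fFF$ into $\rperf_X$ and then verify the standard list of conditions: the functor is an étale sheaf, is nilcomplete and infinitesimally cohesive, admits a global perfect cotangent complex, and has an algebraic-space $0$-truncation of finite type.

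For part (2), I would begin by unpacking the definition of $\dquot_\fFF$ on an ordinary $k$-algebra $A$ (viewed as a cdga concentrated in degree zero). The derived tensor products collapse to ordinary ones on flat modules, and the ``quotient complex'' conditions (for instance, that the cofiber is concentrated in degree zero and $A$-flat) reduce an $A$-point of $\dquot_\fFF$ to a short exact sequence $0 \to \kKK \to \fFF_A \to \gGG \to 0$ with $\gGG$ flat over $A$. This yields a bijection with $Quot_\fFF(A)$, functorial in $A$, producing the comparison map in (2). Representability of $Quot_\fFF$ by a quasiprojective scheme then handles the truncation condition in the representability criterion.

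The remaining axioms for part (1) come from general properties of derived stacks of perfect complexes. Writing $\dquot_\fFF$ as a homotopy fiber product that fixes the source of a morphism in $\rperf_X$ to be $\fFF$ and imposes the quotient condition, étale descent, infinitesimal cohesiveness, and nilcompleteness are inherited from $\rperf_X$, which is locally geometric by Toën--Vaquié. The global cotangent complex is assembled from that of $\rperf_X$ via the fiber-product decomposition; its perfectness follows from the pointwise calculation in part (3).

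For part (3), I would compute the tangent complex at a $k$-point $[\fFF \to \gGG]$ by evaluating $\dquot_\fFF$ on square-zero extensions $k \oplus M[n]$. A deformation of the exact triangle $\kKK \to \fFF \to \gGG$ over such an extension is classified, via standard derived deformation theory of exact triangles of perfect complexes, by elements of $\RHom_X(\kKK, \gGG \otimes_k M[n])$; functoriality in $M$ and $n$ then identifies the tangent complex with $\RHom_X(\kKK, \gGG)$. I expect the main technical hurdle to be verifying infinitesimal cohesiveness and producing the global cotangent complex, since the ``surjectivity'' condition defining $\dquot_\fFF$ must be handled carefully at the level of cdga-valued points; once that framework is in place, the tangent-complex identification in (3) reduces to a fairly standard deformation-theoretic calculation.
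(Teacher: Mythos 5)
Your overall strategy for part (1) differs genuinely from the paper's. You propose to verify the Artin--Lurie representability axioms (nilcompleteness, infinitesimal cohesiveness, existence of a global perfect cotangent complex, etc.) directly. The paper instead proves a single structural lemma: that $\dquot_\fFF \to \rpsmor(\fFF,-)^{\leq 0}$ is a Zariski open immersion of derived stacks, using the correspondence between open substacks of $\bf F$ and of $t_0(\bf F)$ from \cite{stv}. Local geometricity then comes for free from the ambient morphism stack, and the upgrade from ``locally geometric with scheme truncation'' to ``derived scheme'' is a quotable theorem from \cite{hag2}. The open-immersion route neatly sidesteps having to check cohesiveness and nilcompleteness by hand at the level of the flatness/surjectivity conditions---precisely the technical hurdle you flag at the end of your write-up---so while your route is in principle viable, it front-loads exactly the difficulty the paper's lemma is designed to avoid.

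There are a couple of substantive gaps. First, for part (2) you argue only that $A$-points of $\dquot_\fFF$ over a discrete $A$ biject with short exact sequences, but you need the full \emph{homotopy equivalence} of spaces $\dquot_\fFF(A) \simeq Quot_\fFF(A)$, i.e.\ vanishing of all higher $\pi_i$. That vanishing is not automatic: it uses the long exact sequence for the left fibration $Perf(X_A)^{\leq 0}_{\fFF_A/} \to Perf(X_A)^{\leq 0}$ and, crucially, the fact that the boundary map $Aut(\gGG) \to \Hom(\fFF_A, \gGG)$, $f \mapsto f\circ q$, is injective \emph{because $q$ is surjective}. Your sketch never invokes the surjectivity condition for this purpose. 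Second, you work with $\rperf(X)$, but for quasiprojective (non-projective) $X$ the paper carefully works with $\rpsperf(X)$, the stack of perfect complexes with proper support, since the quotients are required to have proper support and pseudo-perfectness (not mere perfectness) is what gives local geometricity in the TV framework. Finally, the claim that perfectness of the global cotangent complex ``follows from the pointwise calculation in part (3)'' is not justified as stated---a fiberwise identification does not yield a globally perfect complex without further argument---whereas in the paper's approach this is inherited automatically from the ambient locally geometric stack.
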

	A derived version of the quot scheme has been constructed earlier in \cite{ckquot}, but only as a dg-scheme and not as a derived scheme representing a moduli functor. The first definition of a derived quot functor appeared in \cite{aron} and was for projective schemes. Our definition is for quasiprojective varieties and agrees with the former in the case of projective varieties.
	
	The construction of affine charts of the derived quot scheme as spectra of cdgas makes local computations for derived quot schemes tractable in many cases. Here is an interesting application of this feature to enumerative geometry.
	
	Motivic Donaldson-Thomas theory, introduced in \cite{kontsevich} and further developed in \cite{bjm}, is a categorification of the classical Donaldson-Thomas theory which is concerned with virtual counts of stable coherent sheaves on Calabi-Yau threefolds. One of the ingredients in defining motivic DT invariants is the algebraic d-critical locus structure on moduli spaces of coherent sheaves in the sense of \cite{dcritical}.
	
	Let $X$ be a Calabi-Yau threefold. The derived moduli stack of coherent sheaves on $X$, and more generally the derived moduli stack of perfect complexes on $X$, has a canonical $(-1)$-shifted symplectic structure, as shown in \cite{ptvv}. This shifted symplectic structure truncates to an algebraic d-critical locus on the classical moduli stacks. While it was later shown, in \cite{bbj} and \cite{bbbbj}, that such shifted symplectic structures always locally have a (non-unique) ``Darboux form'', the \textit{canonical} structure guaranteed by the main theorem in \cite{ptvv} has been computed in few examples and that too only in the case of the moduli stack of sheaves with constant Hilbert polynomial: for $X = \CC^3$ in \cite{ricolfi} and more generally for toric local Calabi-Yau threefolds in \cite{katzshi}.
	
	The presentation we obtain for the derived quot scheme can simplify computations of the canonical shifted symplectic structure on these derived moduli stacks for a larger class of Calabi-Yau threefolds. We describe the pullback of a shifted symplectic structure to an affine chart of the derived quot scheme and state a conjecture (Conjecture \ref{conjecture}) that it coincides with the canonical one.
	
	\subsection*{Outline of the paper}
	In section 2 we recall some properties of the derived stack of perfect complexes that will be useful later. In section 3 we define the derived quot functor and prove Theorem \ref{maintheorem}. In section 4 we do explicit calculations of the derived quot scheme and obtain a presentation in select cases. Finally, in section 5, we discuss shifted symplectic structures on derived stacks of perfect complexes and describe the conjectured pullback of one to the derived quot scheme of the Fermat quintic: we choose this space to work with because it is an example of a \textit{projective} Calabi-Yau threefold and also because of the Fermat quintic's prominence in the study of mirror symmetry. 
	
	\subsection*{Notation}
	
	We work in the homotopical algebraic geometry setting of \cite[\S 2.2]{hag2}. We work throughout over $k = \CC$ though many results hold true for any algebraically closed field of characteristic 0.
	
	We use cohomological conventions throughout: a \textit{connective} object $E$ is one for which $\pi_i(E) = 0$ for $i < 0$. We will freely interchange $\pi_i(E)$ and $H^{-i}(E)$, so that a connective chain complex or cdga $A$ is one for which $H^i(A) = 0$ for $i > 0$.
	
	For an $\infty$-category $\cCC$, we denote its \textit{core} i.e.\ maximal $\infty$-subgroupoid by $\cCC^\simeq$.
	
	For a cdga $A$ and a scheme $X$, denote $X\times^h \dspec A$ by $\bf X_A$.
	
	We use \textbf{boldface} letters to denote derived schemes/stacks. The classical truncation of a derived scheme $\bf U$ is denoted by $t_0(\bf U)$.
	
	Throughout, $X$ is a smooth quasiprojective variety over $k$.
	
	\subsection*{Acknowledgments}
	I would like to thank my advisor Sheldon Katz for suggesting this project and helping me throughout with ideas, suggestions and critiques. I am also grateful to Aron Heleodoro and Yun Shi for helpful conversations.
	
	\section{The derived stack of perfect complexes}
	\subsection{Perfect and pseudo-perfect objects}
	For a derived scheme $\bf Y$, denote by $QCoh(\bf Y)$ the dg-category of (complexes of) quasicoherent $\oOO_{\bf Y}$-modules. Using the dg-nerve construction \cite[\S 1.3]{HA}
	we can also treat it as a stable $\infty$-category. This category has a symmetric monoidal structure. Using \cite[Proposition 3.6]{benzvi},
	\begin{defn}
		An object of $QCoh(\bf Y)$ is called {\bf perfect} if it is (strongly) dualizable in the above mentioned symmetric monoidal structure.
	\end{defn}
	For ${\bf Y} = \dspec A$, this is equivalent to being perfect as an $A$-dg-module, which is equivalent to being a compact object in the homotopy i.e. derived category $D(A)$. Denote the full $\infty$-subcategory (equivalently, dg-subcategory) of $QCoh(\bf Y)$ consisting of perfect (resp.\ perfect connective) objects by $Perf({\bf Y})$ (resp.\ $Perf({\bf Y})^{\leq 0}$).
	
	For $X$ a smooth quasiprojective variety, the moduli stack of objects in the dg-category $Perf(X)$ is defined in \cite{tv} as
	\begin{align*}
		{\bf\mMM}_{Perf(X)}: \cdga &\to \infgpd\\
		A &\mapsto \map\dgcat(Perf(X)^{op}, Perf(A))
	\end{align*}
	In order to understand this stack in terms of perfect complexes on $X$, we need the notion of a \textit{pseudo-perfect complex}.
	
	For derived schemes $\bf U$ and $\bf V$ and $\eEE \in QCoh(\bf U \times \bf V)$, consider the functor
	\begin{align*}
		\Phi_\eEE: QCoh({\bf U})^{op} &\to QCoh(\bf V)\\
		\fFF &\mapsto \RR q_*(\RR Hom(\RR p^*\fFF, \eEE))
	\end{align*}
	where $p$ and $q$ are the projections to $\bf U$ and $\bf V$ respectively.
	\begin{defn}
		An object $\eEE \in QCoh(\bf U \times \bf V)$ is called \textbf{pseudo-perfect relative to $\bf V$} if the map $\Phi_\eEE$ defined above takes perfect objects to perfect objects.
		For $U$ a scheme and $A$ a cdga, we will simply call an object $\eEE \in QCoh(\bf U_A)$ \textbf{pseudo-perfect} if it is pseudo-perfect relative to $\bf A$.
	\end{defn}
	By \cite[Proposition 2.22]{tv}, $\eEE \in QCoh(\bf U_A)$ is pseudo-perfect if and only if it is pseudo-perfect as a complex on $U_{\spec H^0(A)}$.
	Denote the full $\infty$-subcategory of $QCoh(\bf U_A)$ generated by the pseudo-perfect objects by $PsPerf(\bf U_A)$. We have the following lemma relating $PsPerf(\bf U_A)$ and $Perf(\bf U_A)$:
	\begin{lem}[{\cite[Lemma 2.8]{tv}}]\label{lem:psperf}
		For $U$ a scheme and $A$ a cdga,
		\begin{enumerate}
			\item If $U$ is proper, then $Perf({\bf U_A}) \subset PsPerf(\bf U_A)$.
			\item If $U$ is smooth, then $PsPerf({\bf U_A}) \subset Perf(\bf U_A)$.
		\end{enumerate}
	\end{lem}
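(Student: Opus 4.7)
The plan is to treat the two inclusions separately, with (1) following from standard stability properties of perfect complexes under the six-functor formalism and (2) requiring a more delicate local argument using smoothness.

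For part (1), I would proceed as follows. Let $\eEE \in Perf({\bf U}_A)$, and take any $\fFF \in Perf(U)$. First, since perfect complexes are stable under derived pullback, $\mathbb{R}p^*\fFF$ is perfect on ${\bf U}_A$. Next, because perfect objects are exactly the dualizable objects and the symmetric monoidal structure on $QCoh({\bf U}_A)$ preserves dualizability, we can rewrite $\mathbb{R}\mathcal{H}om(\mathbb{R}p^*\fFF,\eEE) \simeq (\mathbb{R}p^*\fFF)^\vee \otimes^{\mathbb{L}} \eEE$, and this is perfect as a tensor of two perfect complexes. Finally, $q\colon {\bf U}_A \to \dspec A$ is proper, since it is the base change of the proper morphism $U \to \spec k$ along $\dspec A \to \spec k$, and I would invoke the fact that proper pushforward preserves perfect complexes to conclude that $\Phi_\eEE(\fFF)$ is perfect over $A$. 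The genuine technical point sitting behind the argument is this last compatibility of $\mathbb{R}q_*$ with perfection, but once that is in hand the rest is formal.

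For part (2), I would use locality together with smoothness. By the remark preceding the lemma, pseudo-perfectness of $\eEE$ is detected on the underlying classical scheme $U_{\spec H^0(A)}$, so I can reduce to verifying perfectness after restricting to an affine open $\spec B \subset U$ with $B$ a smooth $k$-algebra of finite type. On such an affine chart, $\eEE$ is represented by a complex of $A \otimes^{\mathbb{L}} B$-modules. Testing the functor $\Phi_\eEE$ against $\oO_U$ gives that the global sections $\mathbb{R}\Gamma({\bf U}_A,\eEE)$ form a perfect $A$-module, and testing against the family of twists $\oO_U(n)$ (or, equivalently, against the generators of $Perf(U)$) controls the coherent cohomology of $\eEE$ in each degree. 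Then I would exploit smoothness of $B$ over $k$, which gives $B$ finite global dimension, to upgrade bounded coherent $A \otimes^{\mathbb{L}} B$-modules of the required form to objects of bounded Tor-amplitude, hence to perfect complexes on ${\bf U}_A$.

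The main obstacle is the second step of part (2): translating the external condition that $\Phi_\eEE$ sends perfect to perfect into the internal statement that $\eEE$ itself is dualizable. The cleanest way I know to bridge this is to use smoothness of $U$ to guarantee that the diagonal $\Delta\colon U \to U \times U$ has $\Delta_*\oO_U$ perfect as an object of $QCoh(U \times U)$; pulling back this perfect resolution along the appropriate projections from ${\bf U}_A \times_A {\bf U}_A$ lets one realize $\eEE$ (up to quasi-isomorphism) as an iterated application of pullback, tensor, and pushforward to objects that are already known to be perfect by the pseudo-perfect hypothesis. I expect the bookkeeping of base change isomorphisms needed to make this precise to be where the real work lies, whereas part (1) should be essentially formal.
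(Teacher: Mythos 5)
This lemma is cited from Toën--Vaqui\'e (\cite[Lemma 2.8]{tv}) and is not proved in the paper, so there is no in-paper argument to compare against; what follows is an evaluation of the proposal against the standard proof.

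Your argument for part (1) is essentially correct and matches the standard one: $\RR p^*\fF$ is perfect, $\RR\mathcal{H}om(\RR p^*\fF,\eE)\simeq(\RR p^*\fF)^\vee\otimes^\LL\eE$ is perfect, and $q$ is proper and flat (base change of $U\to\spec k$), so $\RR q_*$ preserves perfect complexes. The one fact you flag as "the genuine technical point" is exactly the content, and it needs both properness and finite Tor-dimension (here supplied by flatness over the field $k$), not properness alone; you should make that explicit.

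For part (2), your first paragraph has a gap. The remark preceding the lemma only justifies replacing $A$ by $H^0(A)$; it says nothing about restricting along an affine open $\spec B\subset U$. With the paper's definition of pseudo-perfectness via the global functor $\Phi_\eE=\RR q_*\RR\mathcal{H}om(\RR p^*(-),\eE)$, it is not automatic that pseudo-perfectness localizes on $U$ (Toën--Vaqui\'e's own definition is local on $U$, but then one must show the two definitions agree, which is itself a small argument). Even granting localization, the passage from "$\eE|_{\spec B\times\dspec A}$ is perfect as an $A$-module" to "$\eE|_{\spec B\times\dspec A}$ is perfect as a $B\otimes^\LL A$-module" is where all the work is, and "finite global dimension of $B$" alone does not obviously close this; what is needed is precisely the diagonal argument you then offer as an alternative. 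Your second paragraph has the right idea and is what TV actually do: $U$ smooth implies $\Delta_*\oOO_U$ is perfect on $U\times U$, hence lies in the thick subcategory generated by external products $F\boxtimes G$ of perfect complexes, and writing $\eE\simeq\Phi_{\Delta_*\oOO_U}(\eE)$ reduces perfectness of $\eE$ to perfectness of each $\Phi_{F\boxtimes G}(\eE)\simeq\Phi_\eE(F)\otimes_A G_A$, which follows from the pseudo-perfect hypothesis. One point you should not leave implicit: for non-proper $U$ the projection $q_2\colon {\bf U}_A\times_A {\bf U}_A\to {\bf U}_A$ along which the integral transform pushes forward is not proper, so the preservation of perfection here relies on the kernel $\Delta_*\oOO_U$ having proper support over each factor (its support is the diagonal, isomorphic to $U$). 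As written, the proposal correctly identifies the skeleton of the standard proof but leaves the load-bearing steps of part (2) at the level of a plan rather than an argument.
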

	In particular, for our smooth quasiprojective variety $X$, $PsPerf({\bf X_A}) \subset Perf(\bf X_A)$, with equality holding if $X$ is projective. The functor ${\bf\mMM}_{Perf(X)}$ is, by \cite[\S 3]{tv}, equivalent to
	\begin{align*}
		\rpsperf(X): \cdga &\to \infgpd\\
		A &\mapsto PsPerf({\bf X_A})^\simeq
	\end{align*}
	Since the classical quot functor is usually defined in terms of quotients with proper support, we wish to understand the notion of pseudo-perfect complexes in terms of their support. Accordingly, we have
	\begin{defn}
		The \textbf{support} $\supp\; \eEE$ of a complex of sheaves $\eEE \in QCoh({\bf X_A})$ is the union of the supports of the cohomology sheaves $\hHH^i(\eEE)$, considered as a subscheme of $t_0({\bf X_A}) = X \times \spec H^0(A)$. The complex $\eEE$ is said to have \textbf{proper support over $\bf A$} if $\supp\; \eEE$ is proper over $\spec H^0(A)$.
	\end{defn}
	By \cite[Remark 5.5.3]{DAG}, $\supp\; \eEE$ is proper over $H^0(A)$ if and only if $\supp\; \eEE \to \bf A$ is a proper morphism of derived schemes. Since pseudo-perfectness and properness of support can both be checked at the level of classical schemes, the following is a straightforward application of the proof of \cite[Proposition B.1]{efimov}:
	\begin{lem}
		Let $X$ be a smooth quasiprojective variety and $A$ a cdga. Let $\eEE \in QCoh({\bf X_A})$ be a complex of sheaves on $\bf X_A$. Then $\eEE$ is pseudo-perfect relative to $\bf A$ if and only if it is perfect and has proper support over $\bf A$.
	\end{lem}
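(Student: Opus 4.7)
The plan is to reduce both pseudo-perfectness and properness of support to conditions on the classical truncation and then invoke the classical form of \cite[Proposition B.1]{efimov}. Three ingredients justify this reduction. First, by \cite[Proposition 2.22]{tv}, $\eEE$ is pseudo-perfect relative to $\mathbf{A}$ if and only if its derived restriction to $X \times \spec H^0(A)$ is pseudo-perfect over $\spec H^0(A)$. Second, since $\supp \eEE$ is by definition a subscheme of $t_0(\mathbf{X}_A) = X \times \spec H^0(A)$, properness of support over $\mathbf{A}$ is literally the same condition as properness of support over $\spec H^0(A)$ for the restriction. Third, perfectness is stable under derived pullback, so it passes to the truncation.

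For the forward direction, Lemma \ref{lem:psperf}(2) immediately gives perfectness, so only proper support is at stake: the first reduction transports pseudo-perfectness to the truncation, the classical Efimov statement forces proper support there, and the second reduction lifts this back to $\mathbf{A}$. For the reverse, the second and third reductions replace the hypothesis by a perfect complex with proper support on the classical truncation; the classical Efimov statement gives pseudo-perfectness there, and the first reduction lifts it back to pseudo-perfectness over $\mathbf{A}$.

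The only non-formal input is the classical Efimov result itself. The sole conceptual subtlety — that pseudo-perfectness, which a priori involves derived $\RHom$ against arbitrary perfect test objects, descends cleanly to the classical truncation — is exactly the content of \cite[Proposition 2.22]{tv}; I would expect the main obstacle to be reading its proof carefully enough to confirm that the reduction is genuinely clean for complexes not initially known to have proper support, since Efimov's argument on $X \times \spec H^0(A)$ proceeds by cutting down via compact sub-supports and one must be sure this interacts correctly with the derived structure of $\mathbf{A}$. Everything else is a formal exchange between the derived and classical settings, which accounts for the author's label of ``straightforward''.
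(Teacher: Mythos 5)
Your argument matches the paper's proof: both reduce pseudo-perfectness to the classical truncation $X \times \spec H^0(A)$ via \cite[Proposition 2.22]{tv}, observe that properness of support is already a condition on $t_0(\mathbf{X}_A)$, and then invoke Efimov's \cite[Proposition B.1]{efimov} in the classical setting. The only minor difference is that you explicitly use Lemma~\ref{lem:psperf}(2) to get perfectness of $\eEE$ itself (rather than merely of its restriction to the truncation), a step the paper's terse proof leaves implicit but which is indeed needed since perfectness does not automatically lift along $t_0(\mathbf{X}_A) \hookrightarrow \mathbf{X}_A$.
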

	Therefore the derived stack $\rpsperf(X)$ is the classifying stack of perfect complexes with proper support on $X$.

	The derived stack of perfect modules, $\rperf$, is defined in \cite[\S 3]{tv} (where it is denoted by $\bf \mMM_1$):
	\begin{align*}
		\rperf: \cdga &\to \infgpd\\
		A &\mapsto Perf({\bf A})^\simeq
	\end{align*}
	while the derived mapping stack $\rperf(X) := \rmap(X, \rperf)$ is
	\begin{align*}
		\rperf(X): \cdga &\to \infgpd\\
		A &\mapsto Perf({\bf X_A})^\simeq
	\end{align*}
	It follows from Lemma \ref{lem:psperf}, that there is a morphism of derived stacks
	\[ \rpsperf(X) \to \rperf(X) \]
	that is an equivalence when $X$ is projective.
	
	\subsection{The derived stack of morphisms}
	
	Let $\fFF$ be a perfect complex on $X$. Define the derived stack of morphisms with source $\fFF$ to be the functor that assigns to a cdga $A$ the core of the undercategory of $\fFF_A := \fFF \otimes \oOO_{\bf A} \in Perf({\bf X_A})$:
	\begin{align*}
		\rmor(\fFF, -): \cdga &\to \infgpd\\
		A &\mapsto Perf({\bf X_A})_{\fFF_A/}^\simeq
	\end{align*}
	We will actually need a refinement of this definition. Define the derived stack of morphisms with source $\fFF$ and target a pseudo-perfect complex as the pullback
	\[\begin{tikzcd}
		{\rpsmor(\fFF, -)} \arrow{r}{t} \arrow{d} & {\rpsperf(X)} \arrow{d}\\
		{\rmor(\fFF, -)} \arrow{r}{t} & {\rperf(X)}
	\end{tikzcd}\]
	
	\begin{lem}
		The derived stack $\rpsmor(\fFF, -)$ is locally geometric.
	\end{lem}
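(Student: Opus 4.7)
The plan is to use the defining pullback square to reduce the problem to showing that the top horizontal morphism $t \colon \rpsmor(\fFF, -) \to \rpsperf(X)$ is representable by geometric derived stacks. Combined with the local geometricity of $\rpsperf(X)$ established in \cite{tv}, and the fact that locally geometric stacks are preserved by pullback along representable morphisms in the HAG framework of \cite{hag2}, this will yield the claim.

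I would first analyze the fibers of $t$. Given an affine chart $\dspec A \to \rpsperf(X)$ classifying a pseudo-perfect complex $\gGG$ on $\bf X_A$, the pullback $\rpsmor(\fFF, -) \times_{\rpsperf(X)} \dspec A$ sends a cdga $B$ over $A$ to the space of morphisms $\fFF_B \to \gGG \otimes^{\LL}_A B$. By base change, this functor is controlled by the single $A$-complex $V := \RHom_{\bf X_A}(\fFF_A, \gGG)$; since $\fFF_A$ is perfect and $\gGG$ is pseudo-perfect, the defining property of pseudo-perfectness forces $V$ to be a perfect $A$-complex.

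Next, I would identify the fiber functor with the linear derived stack $\mathbb{V}(V) = \dspec_A \mathrm{Sym}_A(V^\vee)$ associated to $V$. Since linear stacks of perfect complexes are geometric derived stacks, with the degree of geometricity controlled by the Tor-amplitude of $V$, this exhibits $t$ as representable by geometric stacks and completes the argument.

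The only real obstacle lies in guaranteeing perfectness of the relative mapping complex: this is exactly where the pseudo-perfectness hypothesis on the target is needed, since on the non-proper variety $X$ the complex $\RHom(\fFF, \gGG)$ can fail to be perfect for $\gGG$ only assumed perfect. It is precisely to circumvent this failure that the defining pullback square cuts out $\rpsmor$ from $\rmor$ by imposing properness of support on the target.
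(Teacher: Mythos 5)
Your proposal is correct and follows essentially the same route as the paper: both reduce to showing that $t$ is representable by geometric stacks, identify the fiber over a chart $\dspec A \to \rpsperf(X)$ with the linear stack of the perfect $A$-complex $\RR q_*\RHom_{{\bf X}_A}(\fFF_A,\gGG)$, and invoke the geometricity of such linear stacks (the content of \cite[Sub-lemma 3.9]{tv}, which the paper cites directly). The paper additionally makes the Tor-amplitude decomposition $\rpsperf(X)=\bigcup_{a\le b}\rpsperf(X)^{[a,b]}$ explicit to keep track of the degree of geometricity, but this is the same argument organized slightly differently.
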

	\begin{proof}
		By \cite[Proposition 3.13]{tv}, we have a decomposition \[\rpsperf(X) = \cup_{a\leq b} \rpsperf(X)^{[a,b]},\] where each component is $n$-geometric for some $n$. Here the subfunctors $\rpsperf(X)^{[a,b]} \subset \rpsperf(X)$ correspond to modules with Tor-amplitude in $[a, b]$. Defining $\rpsmor(\fFF, -)^{[a,b]}$ as the pullback
		\[\begin{tikzcd}
			{\rpsmor(\fFF, -)^{[a,b]}} \arrow{r}{t} \arrow{d} & {\rpsperf(X)^{[a,b]}} \arrow{d}\\
			{\rpsmor(\fFF, -)} \arrow{r}{t} & {\rpsperf(X)}
		\end{tikzcd}\]
		we obtain a similar decomposition for $\rpsmor(\fFF, -)$. It suffices to show that the upper horizontal map
		\[ \rpsmor(\fFF, -)^{[a,b]} \to \rpsperf(X)^{[a,b]} \]
		is $m$-representable for some $m$ i.e. that given a map $\dspec A \to \rpsperf(X)^{[a,b]}$ corresponding to a pseudo-perfect complex $\gGG$ on $\bf X_A$, the pullback $\rmor(\fFF, \gGG)$ of the diagram
		\[\begin{tikzcd}
			{\rmor(\fFF, \gGG)} \arrow{r} \arrow{d} & {\dspec A} \arrow{d}{\gGG}\\
			{\rpsmor(\fFF, -)^{[a,b]}} \arrow{r}{t} & {\rpsperf(X)^{[a,b]}}
		\end{tikzcd}\]
		is $m$-geometric. Abusing notation to denote the map ${\bf X_B} \to {\bf X_A}$ by $f$, the stack $\rmor(\fFF, \gGG)$ is given by
		\begin{align*}
			\rmor(\fFF, \gGG): A/\cdga &\to \infgpd\\
			(f:A \to B) &\mapsto Map_{\bf X_B}(\fFF_B, f^*\gGG)
		\end{align*}
		which, by a straightforward application of \cite[Sub-lemma 3.9]{tv}, is $m$-geometric for some $m$.
	\end{proof}
	As a special case of the stack defined in the lemma, we have, for $\gGG: * = \spec k \to \rpsperf(X)$, the derived stack of morphisms from $\fFF$ to $\gGG$ as the pullback of
	\[\begin{tikzcd}
		{\rmor(\fFF, \gGG)} \arrow{r} \arrow{d} & {*} \arrow{d}{\gGG}\\
		{\rpsmor(\fFF, -)} \arrow{r}{t} & {\rpsperf(X)}
	\end{tikzcd}\]
	or, equivalently, as the functor
	\begin{align*}
		\rmor(\fFF, \gGG): \cdga &\to \infgpd\\
		A &\mapsto Map_{\bf X_A}(\fFF_A, \gGG_A)
	\end{align*}
	Since all the derived stacks in the above cartesian square are locally geometric, we get, for a point $x: * \to \rmor(\fFF, \gGG)$ corresponding to a morphism $\phi: \fFF \to \gGG$, the following homotopy fiber sequence of tangent complexes
	\begin{align}\label{eq:targettriangle}
		\TT_{\rmor(\fFF, \gGG), x} \to \TT_{\rpsmor(\fFF, -), x} \xrightarrow{t} \TT_{\rpsperf(X), x}
	\end{align}
	All the statements above hold if we replace $\rperf(X)$ etc. by $\rperf(X)^{\leq 0}$ etc.
	
	\section{Derived quot functor}
	In this section we define the derived quot functor and prove Theorem \ref{maintheorem}. We need two definitions:
	\begin{defn}[{\cite[Lemma 2.2.2.2]{hag2}, \cite[Definition 3.15]{lowrey}}]
		For a cdga $A$, an $A$-dg-module $M$ is {\bf flat} if
		\begin{enumerate}
			\item $H^0(M)$ is flat over $H^0(A)$
			\item $M$ is {\bf strong} over $A$, i.e. the natural map
			\[ H^i(A)\otimes_{H^0(A)} H^0(M) \to H^i(M) \]
			is an isomorphism for all $i$.
		\end{enumerate}
		We say a quasicoherent sheaf $\fFF$ of $\oOO_{\dspec A}$-modules is {\bf flat over} $\dspec A$ if the associated $A$-module is. In particular, if $A$ is discrete (i.e. $A \simeq H^0(A)$) then a flat $A$-dg-module $M$ is itself discrete (i.e. $M \simeq H^0(M)$) with $H^0(M)$ a flat $A$-module.
		
		For a map $\pi:{\bf X} \to {\bf Y}$ of derived schemes, we say a quasicoherent $\oOO_{\bf X}$-module $\gGG$ is \textbf{flat over} $\bf Y$ if, for any discrete $\oOO_{\bf Y}$-module $\lL$, $\gGG \otimes^\LL \pi^*\lL$ is also discrete.
	\end{defn}
	\begin{defn}
		A map $\fFF \to \gGG$ in $QCoh({\bf Y})$ is called \textbf{surjective} if $\hHH^0(\fFF) \to \hHH^0(\gGG)$ is surjective.
	\end{defn}
	For the rest of this section, $\fFF$ is a fixed (discrete) coherent sheaf on $X$.
	\begin{defn}
		The \textbf{derived quot functor} $\dquot_\fFF$ is defined as the simplicial presheaf that maps $A \in \cdga$ to the full $\infty$-subgroupoid of $Perf({\bf X_A})_{\fFF_A/}^{\leq 0, \simeq}$ generated by the objects $\phi:\fFF_A \to \gGG$ such that
		\begin{enumerate}
			\item $\gGG$ is flat over $\dspec A$ and has proper support over $\dspec A$
			\item $\phi$ is surjective
		\end{enumerate}
	\end{defn}
	By definition, $\dquot_\fFF$ is a simplicial subpresheaf of $\rpsmor(\fFF, -)^{\leq 0}$.
	\begin{prop}
		The functor $\dquot_\fFF$ is a derived stack.
	\end{prop}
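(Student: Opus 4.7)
The plan is to realize $\dquot_\fFF$ as a full simplicial subpresheaf of the derived stack $\rpsmor(\fFF, -)^{\leq 0}$ and verify that the three conditions defining it---flatness of $\gGG$ over $\dspec A$, properness of $\supp\, \gGG$ over $\dspec A$, and surjectivity of $\phi$---are both stable under arbitrary base change and local for the étale topology on $A$. Once these are checked, descent for $\dquot_\fFF$ follows from descent for the parent.

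First I would establish that $\rpsmor(\fFF, -)^{\leq 0}$ is a derived stack. The preceding lemma shows $\rpsmor(\fFF, -)$ is locally geometric, and the closing remark of \S 2.2 records that the whole discussion goes through with $\rperf(X)$ and $\rpsperf(X)$ replaced by their connective versions, so $\rpsmor(\fFF, -)^{\leq 0}$ is locally geometric as well; in the HAG2 framework, locally geometric stacks satisfy étale descent. Stability of the defining conditions under base change $A \to B$ is immediate: flatness and surjectivity are preserved by $-\otimes_A^\LL B$, and the support of the derived pullback equals the scheme-theoretic pullback of $\supp\, \gGG$, which remains proper over the base.

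Next I would verify étale locality of membership. Each condition can be detected on the classical truncation $t_0(\dspec A) = \spec H^0(A)$: flatness decomposes into $H^0(\gGG)$ being flat over $H^0(A)$ and the strongness isomorphisms $H^i(A) \otimes_{H^0(A)} H^0(\gGG) \xrightarrow{\sim} H^i(\gGG)$, both expressible as statements about $H^0(A)$-modules; properness of support is by definition a property of $\supp\, \gGG \subset t_0({\bf X_A})$; and surjectivity of $\phi$ reduces to surjectivity of $\hHH^0(\fFF_A) \to \hHH^0(\gGG)$. Classical fpqc descent then yields the étale-local character of each condition.

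I do not expect a substantive obstacle: the whole argument amounts to the observation that $\dquot_\fFF$ is cut out of its locally geometric parent by conditions detected on the underlying classical moduli problem, where the analogous statement (that the classical quot functor is a sheaf) is standard. The one place where care is required is the strongness clause of flatness, which is a genuinely derived condition, but even this is a statement about the $H^0(A)$-module structure on each $H^i(\gGG)$ and so descends by the classical arguments.
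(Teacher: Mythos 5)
Your proposal is correct and follows essentially the same route as the paper: note that $\dquot_\fFF$ is a full simplicial subpresheaf (union of connected components) of the derived stack $\rpsmor(\fFF,-)^{\leq 0}$ and reduce descent to the étale-locality of the defining conditions, which is checked by passing to classical schemes and observing that étale covers are faithfully flat and strong. One small redundancy: you also verify proper support of the quotient, but this is already encoded in $\rpsmor(\fFF,-)^{\leq 0}$ via $\rpsperf(X)$, so only flatness and surjectivity need to be checked.
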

	\begin{proof}
		By construction, the simplicial sets of $\dquot_\fFF$ are full simplicial subsets of those of $\rpsmor(\fFF, -)^{\leq 0}$ i.e. for any cdga $A$, the inclusion \[\dquot_\fFF(A) \subset \rpsmor(\fFF, -)^{\leq 0}(A)\] identifies the former with a union of connected components of the latter. Therefore it will follow that $\dquot_\fFF$ is a derived stack once we prove the conditions of flatness and surjectivity are local for the étale topology. But that is clear since an étale hypercover is in particular a faithfully flat map of classical schemes that is also strong.
	\end{proof}
	
	\subsection{Geometricity}
	In this subsection, we will prove the geometricity of the derived stack $\dquot_\fFF$. First, we recall a result about open immersions of derived stacks that we will use often:
	
	\begin{prop}[{\cite[Proposition 2.1]{stv}}]\label{prop:stv}
		Let $\bf F$ be a derived stack and $t_0(\bf F)$ its truncation. There is a bijective correspondence
		\[ \phi_{\bf F}: \{ \text{ Zariski open substacks of } t_0({\bf F})\; \} \to \{ \text{ Zariski open derived substacks of } {\bf F}\; \} \]
		For any open substack $U_0 \hookrightarrow t_0({\bf F})$, we have a homotopy cartesian diagram of derived stacks
		\[\begin{tikzcd}
			{U_0} & {t_0({\bf F})} \\
			{\phi_{\bf F}(U_0)} & {\bf F}
			\arrow[hook, from=1-1, to=1-2]
			\arrow[hook, from=2-1, to=2-2]
			\arrow[from=1-1, to=2-1]
			\arrow[from=1-2, to=2-2]
		\end{tikzcd}\]
		where the vertical maps are the canonical closed immersions.
	\end{prop}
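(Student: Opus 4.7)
The plan is to reduce to the affine case and exploit the fact that the closed immersion $t_0({\bf F}) \hookrightarrow {\bf F}$ is an isomorphism on the underlying Zariski site, so that Zariski-local data on ${\bf F}$ is determined by its truncation. First I would handle the case where ${\bf F} = \dspec A$ for $A$ a cdga. Here $t_0({\bf F}) = \spec H^0(A)$, and any Zariski open $U_0 \subset \spec H^0(A)$ is covered by principal opens $D(\bar f_i)$ for $\bar f_i \in H^0(A)$. Since $H^0(A)$ is a quotient of the degree-$0$ part of $A$, each $\bar f_i$ lifts to some $f_i \in A^0$, and I would set $\phi_{\bf F}(U_0)$ to be glued from $\dspec A[f_i^{-1}]$. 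One checks that the localization $A \to A[f^{-1}]$ is independent (up to equivalence) of the lift $f$ of $\bar f$, since two lifts differ by a boundary, which becomes a unit after inverting $f$; and that compatible gluing data on $H^0(A)$ lifts to compatible gluing data for the derived localizations. The inverse map sends an open derived substack ${\bf V} \hookrightarrow {\bf F}$ to $t_0({\bf V}) \hookrightarrow t_0({\bf F})$, and the two operations are mutually inverse by construction of derived localization.

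Next I would globalize. Choose a smooth atlas $\{\dspec A_\alpha \to {\bf F}\}$; this induces a smooth atlas $\{\spec H^0(A_\alpha) \to t_0({\bf F})\}$. Given an open $U_0 \hookrightarrow t_0({\bf F})$, the pullback $U_0 \times_{t_0({\bf F})} \spec H^0(A_\alpha)$ is a Zariski open of $\spec H^0(A_\alpha)$, so by the affine case it lifts canonically to a Zariski open derived substack of $\dspec A_\alpha$. The compatibility on double intersections follows from the canonicity of the affine construction together with descent, so one obtains a Zariski open derived substack $\phi_{\bf F}(U_0) \hookrightarrow {\bf F}$. The same argument in reverse shows that any open derived substack is determined by its truncation, yielding the desired bijection.

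Finally, the homotopy cartesian square is immediate from the construction: at the affine atlas level both $\phi_{\bf F}(U_0) \times_{\bf F} t_0({\bf F})$ and $U_0$ are computed as the same fiber product $\dspec A[f_i^{-1}] \times_{\dspec A} \spec H^0(A)$, and the vertical maps are the canonical closed immersions $t_0 \hookrightarrow \mathrm{id}$, which are compatible with Zariski localization.

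The main obstacle I anticipate is the descent step: one must verify that the affine lifting $U_0 \mapsto \phi_{\dspec A}(U_0)$ is functorial enough in $A$ to glue along a smooth (not merely étale) atlas of ${\bf F}$. Concretely, for a smooth map $\dspec B \to \dspec A$, one needs that the localization $B[g^{-1}]$ for a lift $g$ of the pullback of $\bar f$ recovers the base-change of $A[f^{-1}]$ to $\dspec B$; this is a standard but slightly delicate check using that flat base change commutes with localization at a single element, together with the fact that ambiguity in choosing lifts is killed by the localization itself.
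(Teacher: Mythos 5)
The paper does not give its own proof of this proposition: it is cited verbatim as \cite[Proposition 2.1]{stv}. The only trace of the construction that appears in this paper is the formula used later in the proof of Lemma~\ref{lem:opensubstack}, which defines $\phi_{\bf F}(U_0)$ functorially by
\[ A \longmapsto {\bf F}(A)\times_{t_0({\bf F})(H^0(A))} U_0(H^0(A)), \]
i.e.\ as the homotopy pullback of $\bf F$ against the embedding of the classical open substack. Your proposal instead builds $\phi_{\bf F}(U_0)$ by reducing to the affine case and gluing along a smooth atlas, so the route is genuinely different even though the two constructions agree on affines. The functorial pullback definition is preferable here for a structural reason: the statement is phrased for an arbitrary derived stack $\bf F$, with no geometricity hypothesis, so a smooth affine atlas of $\bf F$ need not exist, and your globalization step silently narrows the scope of the proposition. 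The pullback formula sidesteps both the atlas issue and all of the descent/compatibility checks you flag as the main obstacle at the end, because it is manifestly compatible with arbitrary base change.

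Two smaller points in the affine step. The sentence ``two lifts differ by a boundary, which becomes a unit after inverting $f$'' is not what you mean: the boundary $d(g)=f-f'$ does not become a unit and should not. The correct statement is that $f'$ has the same image as $f$ in $H^0(A[f^{-1}])$, and for a connective cdga an element of degree zero is invertible if and only if its class in $H^0$ is invertible; hence $f'$ is already a unit in $A[f^{-1}]$, giving the canonical equivalence $A[f^{-1}]\simeq A[f'^{-1}]$. You should state that lemma explicitly, as it is also exactly what you need for the final assertion that ``any open derived substack is determined by its truncation'': the substance there is that a Zariski open immersion $\dspec B \hookrightarrow \dspec A$ is flat, hence strong, hence recovered from $H^0(B)$ as $B \simeq A\otimes^{\LL}_{H^0(A)} H^0(B)$; ``by construction of derived localization'' is too thin to cover this. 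With those repairs your proof is correct for (locally) geometric derived stacks, which is in fact the only setting in which the paper invokes the proposition, but it does not establish the result in the stated generality.
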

	\begin{lem}\label{lem:opensubstack}
		The map $\dquot_\fFF \to \rpsmor(\fFF, -)^{\leq 0}$ is a Zariski open immersion of derived stacks.
	\end{lem}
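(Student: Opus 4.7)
The plan is to invoke Proposition \ref{prop:stv}. Since $\dquot_\fFF \subset \rpsmor(\fFF,-)^{\leq 0}$ is, by the preceding proposition, the inclusion of a union of connected components for each cdga $A$, this inclusion is already a monomorphism of simplicial presheaves. To promote it to a Zariski open immersion of derived stacks, it suffices by Proposition \ref{prop:stv} to check two things: first, that the induced inclusion of classical truncations $t_0(\dquot_\fFF) \hookrightarrow t_0(\rpsmor(\fFF,-)^{\leq 0})$ is a Zariski open immersion of ordinary stacks; and second, that the square comparing $\dquot_\fFF$ and $\rpsmor(\fFF,-)^{\leq 0}$ to their truncations is homotopy cartesian, so that $\dquot_\fFF$ is precisely the derived open substack that Proposition \ref{prop:stv} associates to $t_0(\dquot_\fFF)$.

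For the first point, I would unpack both defining conditions over a discrete ring $A$: the target $\gGG$ must be a discrete $A$-flat coherent sheaf on $X_A$ with proper support, and $\phi:\fFF_A \to \gGG$ must be surjective. Surjectivity is Zariski open because the failure locus is the (closed) support of the coherent cokernel. Being a discrete flat sheaf is an open condition on connective perfect complexes via upper semicontinuity of Tor-amplitude. Together these cut out a Zariski open substack of $t_0(\rpsmor(\fFF,-)^{\leq 0})$, which is exactly $t_0(\dquot_\fFF)$.

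The main obstacle is the second point: the derived-to-classical compatibility of the flatness condition. Surjectivity descends trivially, since it is only a statement about $H^0$. For flatness, the key technical input I expect to need is the equivalence, for a perfect connective $\oOO_{\bf X_A}$-module $\gGG$, between (a) $\gGG$ being strong with $H^0(\gGG)$ flat over $H^0(A)$ (Definition 3.2 applied to the underlying $A$-module), and (b) the base change $\gGG\otimes_A^\LL H^0(A)$ being discrete and flat over $H^0(A)$. One direction is immediate: strongness plus classical flatness gives $\gGG\otimes_A^\LL H^0(A)\simeq H^0(\gGG)$, which is discrete and flat. The converse is the substantive part and I would extract it from the convergent hyper-Tor spectral sequence $\text{Tor}^A_p(H^q(\gGG), H^0(A)) \Rightarrow H^{q-p}(\gGG\otimes_A^\LL H^0(A))$, reading off strongness from the vanishing of off-diagonal terms together with the $A$-module finiteness that perfectness supplies. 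Once this equivalence is in hand, the cartesian-square condition is verified and Proposition \ref{prop:stv} delivers the Zariski open immersion.
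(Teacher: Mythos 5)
Your high-level structure matches the paper's: reduce to Proposition \ref{prop:stv}, then check (i) that $t_0(\dquot_\fFF)$ is an open substack of $t_0(\rpsmor(\fFF,-)^{\leq 0})$ and (ii) the cartesian condition $\dquot_\fFF(A) \simeq \rpsmor(\fFF,-)^{\leq 0}(A) \times_{\rpsmor(\fFF,-)^{\leq 0}(H^0(A))} \dquot_\fFF(H^0(A))$. The divergence is in how you handle the flatness half of (ii). The paper's operative definition of flatness of $\gGG \in QCoh({\bf X_A})$ over $\dspec A$ is the tensor-product characterization: $\gGG \otimes^\LL \pi^*\lL$ is discrete for every discrete $\oOO_{\dspec A}$-module $\lL$. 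With this definition the descent is one line: a discrete $\lL$ is an $H^0(A)$-module, so $\gGG \otimes^\LL_{\oOO_{\bf X_A}} \pi^*\lL \simeq \gGG_0 \otimes^\LL_{\oOO_{\bf X_{H^0(A)}}} \pi^*\lL$, which is discrete because $\gGG_0$ is a classical flat sheaf. You instead route through the strong $+$ $H^0$-flat formulation and invoke a spectral sequence, which is a genuinely heavier path, and the spectral sequence you wrote is not quite right: $A$ is a cdga, so $\text{Tor}^A_p(H^q(\gGG), H^0(A))$ should be the K\"unneth-type spectral sequence with $E_2$-page $\text{Tor}^{H^*(A)}_s(H^*(\gGG), H^0(A))_t$ indexed over the graded ring $H^*(A)$, and extracting strongness from its collapse is more delicate than ``vanishing of off-diagonal terms.'' The cleaner route, which you in fact state in passing, is that your condition (b) yields $\gGG \otimes_A^\LL N \simeq \gGG_0 \otimes^\LL_{H^0(A)} N$ discrete for every discrete $N$, i.e.\ flatness in the tensor sense, and then strong $+$ $H^0$-flat follows from \cite[Lemma 2.2.2.2]{hag2}; that is essentially what the paper does, and it avoids the spectral sequence altogether. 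Your handling of surjectivity descent (it is an $H^0$-statement) and of the openness-of-truncation step match the paper's.
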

	\begin{proof}
		We establish the shorthand $\qQ := \dquot_\fFF$ and $\mM := \rpsmor(\fFF, -)^{\leq 0}$. By Proposition \ref{prop:stv}, it suffices to show that $t_0(\qQ)$ is an (ordinary) open substack of $t_0(\mM)$ and that (using the notation from the Proposition) $\phi_\mM(t_0(\qQ)) \simeq \qQ$. We conclude the former by noting that for a map $\spec A \to \mM$ corresponding to $\phi: \fFF_A \to \gGG$ where $\spec A$ is a classical affine scheme, the pullback map $t_0(\qQ)\times_{t_0(\mM)} \spec A \to \spec A$ is a Zariski open immersion of classical schemes, corresponding to the locus over which $\gGG$ is flat and $\phi$ is surjective. For the latter, we first recall the definition of $\phi_\mM(U)$ for an ordinary open substack $U \to t_0(\mM)$:
		\begin{align*}
			\cdga &\to \infgpd\\
			A &\mapsto \mM(A)\times_{t_0(\mM)(H^0(A))} U(H^0(A))
		\end{align*}
		and deduce that we need to show the equivalence
		\[ \qQ(A) \simeq \mM(A)\times_{\mM(H^0(A))} \qQ(H^0(A)) \]
		for all cdgas $A$. Since we have already shown that $\qQ(A) \subset \mM(A)$ is a union of connected components, it suffices to show that the above is a bijection at the level of 0-simplices. The 0-simplices of the right hand side are the maps $\phi:\fFF_A \to \gGG$ such that $\gGG_0 := \gGG \otimes_{\oOO_{\bf X_A}} \oOO_{\bf X_{H^0(A)}}$ is flat over $\spec H^0(A)$ and $\phi_0:\fFF_{H^0(A)} \to \gGG_0$ is surjective. There is a natural inclusion of $\qQ(A)_0$ into this set of 0-simplices since any map $\phi$ which has the flatness and surjectivity properties will induce these properties on $\gGG_0$ and $\phi_0$ respectively. It remains to check that if $\phi_0$ and $\gGG_0$ have these properties then so do $\phi$ and $\gGG$.
		
		Suppose $\phi_0$ and $\gGG_0$ have these properties. Then, since $\gGG_0$ is flat over $\spec H^0(A)$, it is discrete, and for any discrete sheaf $\lL$ over $\spec A$, $\gGG \otimes_{\oOO_{\bf X_A}} \pi^*\lL \simeq \gGG_0 \otimes_{\oOO_{\bf X_{H^0(A)}}} \pi^*\lL$ is also discrete, which implies that $\gGG$ is flat over $\dspec A$. Finally, the natural map $\gGG_0 \to \hHH^0(\gGG)$ is surjective and, composed with the surjective map $\phi_0: \hHH^0(\fFF_A) \simeq \fFF_{H^0(A)} \to \gGG_0$, gives the desired surjectivity of $\phi$.
	\end{proof}
	\begin{coro}\label{coro:quotisgeometric}
		The functor $\dquot_\fFF$ is a locally geometric derived stack.
	\end{coro}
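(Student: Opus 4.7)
The plan is to combine Lemma \ref{lem:opensubstack} with the local geometricity of $\rpsmor(\fFF, -)^{\leq 0}$ that was established in the preceding subsection. By the remark closing Section 2, the proof that $\rpsmor(\fFF, -)$ is locally geometric carries over verbatim to its connective variant, so we have a decomposition
\[ \rpsmor(\fFF, -)^{\leq 0} = \bigcup_{a \leq b}\, \rpsmor(\fFF, -)^{\leq 0, [a,b]} \]
in which each piece is $n$-geometric for some $n$ depending on $[a,b]$.

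Next I would pull back this decomposition along the open immersion
\[ \dquot_\fFF \hookrightarrow \rpsmor(\fFF, -)^{\leq 0} \]
provided by Lemma \ref{lem:opensubstack}. Define
\[ \dquot_\fFF^{[a,b]} := \dquot_\fFF \times_{\rpsmor(\fFF, -)^{\leq 0}} \rpsmor(\fFF, -)^{\leq 0, [a,b]}. \]
Since Zariski open immersions are stable under base change, each inclusion $\dquot_\fFF^{[a,b]} \hookrightarrow \rpsmor(\fFF, -)^{\leq 0, [a,b]}$ is again a Zariski open immersion, and the $\dquot_\fFF^{[a,b]}$ form a cover of $\dquot_\fFF$ since the original pieces cover the target.

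Finally I would invoke the standard fact that an open substack of an $n$-geometric stack is $n$-geometric: an open immersion is representable by open immersions of derived schemes (which are $0$-representable and smooth), so applying this to each $\dquot_\fFF^{[a,b]}$ shows it is $n$-geometric. This exhibits $\dquot_\fFF$ as a union of geometric open substacks, which is the definition of being locally geometric. There is no real obstacle here; the corollary is essentially a formal consequence of Lemma \ref{lem:opensubstack} together with the local geometricity of $\rpsmor(\fFF, -)^{\leq 0}$, and the only point requiring minor care is keeping track of the fact that geometricity levels $n$ may vary with $[a,b]$, which is exactly why the conclusion is \emph{local} geometricity rather than geometricity outright.
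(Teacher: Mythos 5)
Your proposal is correct and follows the same reasoning the paper implicitly intends: the corollary is an immediate consequence of Lemma \ref{lem:opensubstack} together with the local geometricity of $\rpsmor(\fFF, -)^{\leq 0}$ (established in \S 2.2 and extended to the connective variant by the closing remark there), using that a Zariski open substack of a locally geometric stack is locally geometric. The paper states the corollary without a written proof, and your fleshed-out version, pulling back the Tor-amplitude decomposition, is precisely the standard way to make that one-line deduction rigorous.
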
	
	
	\subsection{Truncation}
	In this subsection we prove parts (1) and (2) of Theorem \ref{maintheorem}. We begin with a definition. Ordinary commutative $k$-algebras embed into cdgas via an inclusion $i: kCAlg \to \cdga$. Given a functor
	\[ \textbf{F}: \cdga \to \infgpd \]
	its \textbf{truncation} is by definition the restricted functor
	\begin{align*}
		t_0(\textbf{F}): kCAlg &\to \infgpd\\
		A &\mapsto \textbf{F}(i(A))
	\end{align*}
	Denote by $Quot_\fFF$ the classical quot functor that assigns to an ordinary commutative $k$-algebra $A$ the ordinary groupoid
	\begin{align*} Quot_\fFF(A) = \{\, \text{quotients } &\fFF_A \twoheadrightarrow \gGG, \text{ where } \gGG \text{ is a quasicoherent } \oOO_{X\times \spec A} \text{-module} \\ &\text{ of finite presentation flat with proper support over } \spec A\, \}/ \sim\end{align*}
	where the equivalence $\sim$ identifies $\phi: \fFF_A\twoheadrightarrow \gGG$ and $\phi':\fFF_A \twoheadrightarrow \gGG'$ if there exists an isomorphism $\eta:\gGG \to \gGG'$ such that $\eta\circ\phi = \phi'$. We may think of this as a truncated $\infty$-groupoid. Such a quotient $\fFF_A \to \gGG$ is automatically an object of $\dquot_\fFF(A)$ and we get an induced map
	\[ \alpha: Quot_\fFF(A) \to t_0(\dquot_\fFF)(A) \]
	\begin{prop}
		The map $\alpha$ defined above is a bijection on $\pi_0$.
	\end{prop}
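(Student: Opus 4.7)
The plan is to establish the bijectivity by handling surjectivity and injectivity of $\pi_0(\alpha)$ separately. The key technical input is that, because $A$ is discrete, the derived notion of flatness from the excerpt forces the target of any element of $\dquot_\fFF(A)$ to be a discrete, classically flat sheaf.

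For surjectivity I would start with an arbitrary representative $\phi: \fFF_A \to \gGG$ of a class in $\pi_0(\dquot_\fFF(A))$. The second (``strong'') clause of the flatness definition, applied to the discrete cdga $A$, gives $H^i(\gGG) = 0$ for $i \neq 0$, so $\gGG$ is discrete with $H^0(\gGG)$ flat over $A$ in the usual sense. Since $\gGG$ is also perfect and has proper support over the Noetherian scheme $X \times \spec A$, it is a finitely presented quasicoherent sheaf. As $\fFF_A$ is likewise discrete, the surjectivity condition ``$H^0(\phi)$ surjective'' becomes classical surjectivity of $\phi$ as a map of sheaves. Thus $(\fFF_A \to \gGG)$ defines an element of $Quot_\fFF(A)$ whose image under $\alpha$ is $[\phi]$.

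For injectivity I would show that if $\phi: \fFF_A \to \gGG$ and $\phi': \fFF_A \to \gGG'$ are classical quotients whose images are equivalent in $\dquot_\fFF(A)$, then they already represent the same class in $Quot_\fFF(A)$. An equivalence in the core of the undercategory $Perf({\bf X_A})_{\fFF_A/}^{\leq 0, \simeq}$ provides an equivalence $\eta: \gGG \to \gGG'$ in $Perf({\bf X_A})$ together with a homotopy $\eta \circ \phi \simeq \phi'$. The crucial point is that between discrete sheaves the derived mapping space has vanishing higher homotopy groups (its $\pi_i$ computes $\operatorname{Ext}^{-i}$ between discrete sheaves, which vanishes for $i > 0$), so $\eta$ is an honest isomorphism of sheaves and the homotopy collapses to the strict equality $\eta \circ \phi = \phi'$ required by the equivalence relation defining $Quot_\fFF(A)$.

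The step I expect to be the main obstacle is the passage between derived and classical flatness: one must verify carefully, using the strongness condition, that a $\dspec A$-flat perfect complex is actually concentrated in degree zero when $A$ is discrete, and that the resulting discrete sheaf is of finite presentation in the classical sense. The latter follows by truncating a bounded complex of finite-rank projectives locally resolving $\gGG$ and reading off a degree-zero presentation from its top two terms; the remainder of both arguments is a formal consequence of the description of mapping spaces in undercategories of a stable $\infty$-category together with the vanishing of negative Ext groups between discrete sheaves.
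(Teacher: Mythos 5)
Your proof is correct and follows essentially the same route as the paper's: surjectivity comes from the strongness clause of the flatness definition (with $A$ discrete) forcing the target to be discrete with classically flat $H^0$, and injectivity comes from the fact that an equivalence in the undercategory and the accompanying homotopy descend to an honest sheaf isomorphism and a strictly commuting triangle on $H^0$. One small slip worth flagging: $X \times \spec A$ need not be Noetherian since $A$ is an arbitrary $k$-algebra, but the finite presentation of $\gGG$ you need follows from perfection plus discreteness alone, which is exactly the truncated-free-resolution argument you supply, so the conclusion is unaffected.
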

	\begin{proof}
		Injectivity follows from the fact that a morphism $(\fFF_A \to \pPP) \to (\fFF_A \to \pPP')$ of $\dquot_\fFF(A)$ is a quasi-isomorphism $\pPP \to \pPP'$ that makes the induced triangle commute up to homotopy, inducing an isomorphism and genuinely commutative triangle on $\hHH^0$.
		
		For surjectivity, suppose $\fFF_A \to \pPP$ is in some connected component of $t_0(\dquot_\fFF)(A)$. Then by flatness of $\pPP$ over $A$, it follows (for example, by passing to an affine open subscheme of $X \times \spec A$) that $\pPP$ and $\hHH^0(\pPP)$ are quasi-isomorphic and that $\hHH^0(\pPP)$ is flat over $\spec A$. The map $\hHH^0(\fFF_A) = \fFF_A \to \hHH^0(\pPP)$ defines the desired element of $Quot_\fFF(A)$.
	\end{proof}
	Let $x \in \dquot_\fFF(A)$ be an object corresponding to $q:\fFF_A \to \gGG$.
	\begin{prop}
		For $A \in kCAlg$, $x$ as defined above and $i > 0$, $\pi_i(\dquot_\fFF(A), x) = 0$.
	\end{prop}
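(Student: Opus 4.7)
My plan is to reduce the computation of the higher homotopy groups of $\dquot_\fFF(A)$ at $x$ to classical Ext groups, exploiting the hypothesis that $A$ is discrete. First, since $\dquot_\fFF(A)$ is a full sub-$\infty$-groupoid of $Perf({\bf X_A})^{\leq 0, \simeq}_{\fFF_A/}$, its homotopy groups at $x$ agree with those of the ambient $\infty$-groupoid, and for $i \geq 1$ we have $\pi_i(\dquot_\fFF(A), x) = \pi_{i-1}(\text{Aut}(x))$, where $\text{Aut}(x)$ is the space of self-equivalences of $x$ in the undercategory $Perf({\bf X_A})^{\leq 0}_{\fFF_A/}$, sitting as a union of connected components of the full mapping space $M := Map_{Perf({\bf X_A})^{\leq 0}_{\fFF_A/}}(x, x)$. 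By the standard description of mapping spaces in an undercategory, $M$ fits in a homotopy fiber sequence
\[ M \to Map(\gGG, \gGG) \to Map(\fFF_A, \gGG) \]
over the point $q$, where the right-hand map is precomposition with $q$.

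Next I will reduce to classical sheaves. Since $A$ is discrete, $\fFF_A$ is already discrete; flatness of $\gGG$ over $\dspec A$, via the strong condition, forces $H^i(\gGG) = 0$ for $i \neq 0$, so $\gGG$ is discrete as well, exactly as already observed in the proof of the preceding proposition. Mapping spaces between discrete connective objects in $Perf({\bf X_A})^{\leq 0}$ have $\pi_i = \text{Ext}^{-i}$, which vanish for $i > 0$, so both outer terms of the fiber sequence above are discrete and hence so is $M$. This immediately gives $\pi_i(\dquot_\fFF(A), x) = 0$ for $i \geq 2$.

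It remains to handle $\pi_1$, which equals $\pi_0(\text{Aut}(x))$, a subset of the discrete set $M$. An element of $\text{Aut}(x)$ is an automorphism $\eta: \gGG \to \gGG$ satisfying $\eta \circ q = q$ in $\Hom(\fFF_A, \gGG)$ (since everything is discrete, homotopies coincide with strict equalities). The surjectivity condition on $q$, which for discrete sheaves is ordinary sheaf-theoretic surjectivity, then forces $\eta = \text{id}_\gGG$, so $\text{Aut}(x)$ is a point and $\pi_1(\dquot_\fFF(A), x) = 0$.

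The main conceptual step is the reduction from derived to discrete data via flatness over a discrete base; once that is in place, vanishing of higher Exts takes care of $i \geq 2$ and surjectivity of $q$ takes care of $i = 1$. I do not expect a serious technical obstacle; the only point that requires a little care is the correct identification of the undercategory mapping space with the homotopy fiber above, but this is a standard fact about $\infty$-categories and applies verbatim here.
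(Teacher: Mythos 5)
Your proof is correct and follows essentially the same strategy as the paper's: reduce to the ambient core, exploit the discreteness of $\fFF_A$ and $\gGG$ (which is where $A \in kCAlg$ and flatness enter) to kill all higher homotopy, and use surjectivity of $q$ to show that the only self-equivalence of $x$ over $q$ is the identity. The only organizational difference is that the paper runs the long exact sequence of the Kan fibration $Perf({\bf X_A})^{\leq 0,\simeq}_{\fFF_A/} \to Perf({\bf X_A})^{\leq 0,\simeq}$ with fiber $Map(\fFF_A,\gGG)$, whereas you loop down to $\mathrm{Aut}(x)$ and use the undercategory mapping-space fiber sequence $Map_{\fFF_A/}(x,x)\to Map(\gGG,\gGG)\to Map(\fFF_A,\gGG)$; these produce the same exact sequence shifted by one degree, so the content is identical.
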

	\begin{proof}
		By the previous proposition we may assume $\gGG$ is a genuine sheaf and that $q$ is surjective. The inclusion $\dquot_\fFF(A) \subset \rpsmor(\fFF, -)^{\leq 0, \simeq}(A)$ is an inclusion of connected components, so we can compute the homotopy groups in the latter. The map $Perf({ X_A})^{\leq 0}_{\fFF_A/} \to Perf({ X_A})^{\leq 0}$ is a left fibration (\cite[tag:018F]{kerodon}), therefore an isofibration (\cite[tag:01EW]{kerodon}), therefore induces (\cite[tag:01EZ]{kerodon}) a Kan fibration of cores 
		\[ S := Perf({ X_A})^{\leq 0, \simeq}_{\fFF_A/} \to Perf({ X_A})^{\leq 0, \simeq} =: P \]
		The fiber of this map over $\gGG \in Perf({ X_A})^{\leq 0, \simeq}$ is given by the mapping space
		\[ M := Map_{Perf({ X_A})}(\fFF_A, \gGG) \]
		and we have a long exact sequence
		\begin{align*}
			&\cdots \to \pi_{n+1}(P, \gGG) \to \pi_n(M, x) \to \pi_n(S, x) \to \pi_n(P, \gGG) \to\\
			&\cdots \to \pi_1(P, \gGG) \to \pi_0(M) \to \pi_0(S) \to \pi_0(P)
		\end{align*}
		Since $\fFF_A$ and $\gGG$ are both discrete (recall $A$ is discrete), it follows that the mapping space $M$ can be computed in the heart of the stable $\infty$-category $Perf({X_A})$ (we use here the standard t-structure). This heart is equivalent to the nerve of the ordinary category, $Coh({ X_A})$ (\cite[Proposition 7.1.1.13(3)]{HA}) and therefore we have
		\begin{align*}
			\pi_0(M) &= \Hom_{D({X_A})}(\fFF_A, \gGG)\\
			\pi_i(M, x) &= 0, \qquad i>0
		\end{align*}
		The higher homotopy groups of $Perf({ X_A})$ (which are the same as those of $PsPerf({ X_A})$ since the latter is a union of connected components of the former) have been computed in \cite[\S 1.3.7]{hag2}:
		\begin{align*}
			\pi_1(P, \gGG) &= Aut_{D({X_A})}(\gGG)\\
			\pi_i(P, \gGG) &= Ext^{1-i}_{D({X_A})}(\gGG, \gGG), \qquad i > 1
		\end{align*}
		For a discrete sheaf $\gGG$, the negative self ext groups are zero. Putting it all together, we conclude that
		\begin{align*}
			\pi_i(\dquot_\fFF(A), x) = \pi_i(Map_{Perf({X_A})}(\fFF_A, \gGG), x) = 0, \qquad i > 1
		\end{align*}
		and that
		\[ 0 \to \pi_1(\dquot_\fFF(A), x) \to \pi_1(Perf({X_A})^{\leq 0, \simeq}, \gGG) \xrightarrow{b} \pi_0(Map_{Perf({X_A})}(\fFF_A, \gGG))\]
		is an exact sequence. Therefore we will be done if we show that the map $b$ above is injective. Using the above characterizations of homotopy groups, this map is the same as the map
		\begin{align*}
			b: Aut_{D({X_A})}(\gGG) &\to \Hom_{D({X_A})}(\fFF_A, \gGG)\\
			f &\mapsto f\circ q
		\end{align*}
		which is injective by surjectivity of $q$.
	\end{proof}
	Combining the above two propositions, we conclude the map
	\[ \alpha: Quot_\fFF \to t_0(\dquot_\fFF) \]
	is an equivalence of ordinary (higher) stacks. Our quasi-projectivity assumption on $X$ implies (due to Grothendieck, for example \cite[Theorem 6.3]{nitsure}) that $Quot_\fFF$ is representable by a scheme.
	\begin{lem}[{\cite[Theorem C.0.9]{hag2}}]
		If $\mathbf{F}$ is an $n$-geometric $D^-$-stack such that $t_0(\mathbf{F})$ is an Artin $(k+1)$-stack, then $\mathbf{F}$ is $k$-geometric.
	\end{lem}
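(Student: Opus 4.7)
The statement has the character of a bootstrap: from the weak information that $\mathbf{F}$ is $n$-geometric and its truncation is Artin of level $k+1$, we want to upgrade derived geometricity all the way down to level $k$. The natural strategy is induction on the excess $d := n - k \geq 0$, with the base case $d = 0$ being vacuous. For the inductive step, it suffices to show that, under the hypotheses, $\mathbf{F}$ is in fact $(n-1)$-geometric; iterating the argument $n - k$ times terminates at $k$-geometricity.

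To upgrade from $n$-geometric to $(n-1)$-geometric, one must verify two things: that the diagonal $\Delta: \mathbf{F} \to \mathbf{F} \times \mathbf{F}$ is $(n-2)$-representable, and that there is a smooth $(n-1)$-atlas $p: U \to \mathbf{F}$ from a disjoint union of affines $U = \coprod \dspec A_i$. The conditions already hold one level up. Since the truncation functor $t_0$ commutes with homotopy fiber products and with the formation of diagonals, the classical truncations $t_0(\Delta)$ and $t_0(p)$ live inside the Artin $(k+1)$-stack $t_0(\mathbf{F})$: the diagonal of $t_0(\mathbf{F})$ is then $k$-representable, and $t_0(p)$ is a smooth surjection from a classical scheme onto a $(k+1)$-Artin stack, hence also $k$-representable.

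The key technical input is therefore the following claim, which I would establish as a preliminary lemma: a morphism $f$ between locally geometric $D^-$-stacks is $m$-representable whenever its truncation $t_0(f)$ is $m$-representable as a morphism of classical Artin stacks. This is proved by a separate induction on $m$, pulling back along affines $\dspec A \to \mathbf{G}$ and noting that the truncation of the fiber $\mathbf{F} \times_{\mathbf{G}} \dspec A$ is the classical fiber $t_0(\mathbf{F}) \times_{t_0(\mathbf{G})} \spec H^0(A)$, whose geometricity is controlled by the hypothesis; the inductive hypothesis then applies to each such fiber. Applying this lemma to $\Delta$ and to $p$ delivers the needed drops in the representability indices and closes the outer induction.

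The main obstacle is twofold. First, one must carefully track indices through the double induction (on $d = n - k$ on the outside, on $m$ for the representability claim on the inside), which is where off-by-one errors are easiest to make. Second, and more substantively, one must verify that smoothness of $p$ is equivalent to smoothness of $t_0(p)$ given the connective ($D^-$) hypothesis on $\mathbf{F}$: this is ultimately a statement about the cotangent complex and flat descent for connective cdgas, and is precisely the technical ingredient that makes the result specific to the framework of \cite{hag2}.
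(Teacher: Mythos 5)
The paper does not prove this lemma; it imports it verbatim from To\"en--Vezzosi as \cite[Theorem C.0.9]{hag2}, so there is no ``paper's own proof'' to compare against. Your sketch does capture the right mechanism---namely that $t_0$ commutes with homotopy fiber products, so the geometricity of a derived stack can be controlled by that of its truncation via an inductive descent through the geometricity ladder---and this is indeed the shape of the argument in Appendix~C of \cite{hag2}.

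That said, the way the two inductions are organized does not close as written. Your ``preliminary lemma'' (that $f$ is $m$-representable whenever $t_0(f)$ is $m$-representable and $f$ is locally geometric) is not genuinely preliminary: already the base case $m=-1$ asserts that a locally geometric $D^-$-stack whose truncation is affine is itself affine, which is exactly the $k=-1$ instance of the statement you are trying to prove. So the morphism-level claim and the stack-level claim must be handled by a single joint induction, and the natural parameter for that induction is the geometricity level $n$ rather than the excess $d=n-k$: inducting on $d$ stalls at $d=1$, where dropping the diagonal from $(n-1)$- to $(n-2)$-representable requires invoking the $d=1$ case again at level $n-1$, i.e.\ a statement with the same $d$ but smaller $n$. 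You should also note that the smoothness concern you flag is not where the subtlety lies. The candidate $k$-atlas comes from the given $n$-atlas of $\mathbf{F}$, whose maps are already smooth; what needs upgrading is only their representability index, and this is again supplied by the joint induction. The substantive technical inputs are rather (i) the base case ``geometric with affine truncation implies affine,'' and (ii) the identification of the truncation of a derived fiber product with the classical fiber product of truncations, both of which are established in \cite[\S2.2]{hag2}.
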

	In particular, if $\mathbf{F}$ is an $n$-geometric stack such that $t_0(\mathbf{F})$ is a scheme, then $\mathbf{F}$ is a derived scheme. Therefore we conclude
	\begin{prop}
		The derived stack $\dquot_\fFF$ is a derived scheme.
	\end{prop}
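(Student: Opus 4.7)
The plan is to combine three already-established ingredients: (i) $\dquot_\fFF$ is locally geometric (Corollary \ref{coro:quotisgeometric}), (ii) its truncation is equivalent to the classical quot scheme $Quot_\fFF$, which is representable by an honest scheme thanks to Grothendieck's representability result in the quasi-projective setting, and (iii) the HAG2 criterion stated just above, which promotes an $n$-geometric derived stack to a derived scheme whenever its classical truncation is a scheme. The only real content is passing from ``locally geometric'' to a Zariski open cover by genuinely $n$-geometric substacks whose truncations are open subschemes of $Quot_\fFF$; after that, the HAG2 criterion does all the work piece by piece and gluing finishes the argument.

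First, I would import the Tor-amplitude stratification of $\rpsperf(X)$ used earlier in the proof that $\rpsmor(\fFF,-)$ is locally geometric. Concretely, for each pair $a \leq b$, set
\[ \dquot_\fFF^{[a,b]} := \dquot_\fFF \times_{\rpsmor(\fFF,-)^{\leq 0}} \rpsmor(\fFF,-)^{[a,b],\leq 0}. \]
Since $\dquot_\fFF \hookrightarrow \rpsmor(\fFF,-)^{\leq 0}$ is a Zariski open immersion by Lemma \ref{lem:opensubstack}, and each stratum $\rpsmor(\fFF,-)^{[a,b],\leq 0}$ is $n(a,b)$-geometric for some $n(a,b)$ (exactly as in the geometricity proof for $\rpsmor$), each $\dquot_\fFF^{[a,b]}$ is $n(a,b)$-geometric, and these pieces form a Zariski open cover of $\dquot_\fFF$.

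Next, for each $[a,b]$ the stack $\dquot_\fFF^{[a,b]}$ is a Zariski open substack of $\dquot_\fFF$, so Proposition \ref{prop:stv} identifies $t_0(\dquot_\fFF^{[a,b]})$ with an open substack of $t_0(\dquot_\fFF) \simeq Quot_\fFF$, and in particular with a scheme. Applying the cited HAG2 lemma iteratively to $\dquot_\fFF^{[a,b]}$ (repeatedly lowering $n$ since the truncation is a scheme, hence an Artin $0$-stack), I conclude that each $\dquot_\fFF^{[a,b]}$ is a derived scheme.

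Finally, because $\dquot_\fFF$ is covered by Zariski open derived subschemes $\dquot_\fFF^{[a,b]}$ whose pairwise intersections are again open substacks of the same form, $\dquot_\fFF$ itself is a derived scheme. The only step requiring genuine care, and the one I expect to be the main obstacle, is the local-to-global passage: confirming that the Tor-amplitude decomposition inherited from $\rpsperf(X)$ produces bona fide $n$-geometric (not merely locally geometric) pieces to which the HAG2 criterion applies. Once this bookkeeping is handled, the conclusion is essentially formal.
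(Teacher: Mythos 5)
Your proposal is correct and is essentially the paper's argument: combine the locally geometric structure of $\dquot_\fFF$ (Corollary \ref{coro:quotisgeometric}), the identification $t_0(\dquot_\fFF) \simeq Quot_\fFF$ with the Grothendieck quot scheme, and \cite[Theorem C.0.9]{hag2}. The only difference is that you spell out the passage from ``locally geometric'' to ``derived scheme'' by working stratum-by-stratum over the Tor-amplitude decomposition and then gluing Zariski open derived schemes, whereas the paper invokes the HAG2 criterion directly and leaves this bookkeeping (covering by honestly $n$-geometric opens, applying the criterion to each, gluing) implicit; your version is more careful on exactly the point you flag, and it does close that gap correctly.
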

	This concludes the proof of parts (1) and (2) of Theorem \ref{maintheorem}.

	\subsection{Tangent complex}
	The derived scheme $\dquot_\fFF$ has a connective cotangent complex. In this section we compute its tangent complex at a point $x:{\bf \ast} = \dspec k \to \dquot_\fFF$. From the isomorphism $\pi_0(\dquot_\fFF(k)) \simeq Quot_\fFF(k)$, we may assume that $x$ corresponds to an actual quotient $\phi: \fFF_k = \fFF \twoheadrightarrow \gGG$ of sheaves on $X$. Let $\mathcal{K}$ be the kernel of this quotient i.e. we have an exact sequence
	\[ 0 \to \mathcal{K} \to \fFF \xrightarrow{\phi} \gGG \to 0 \]
	which gives rise to the exact triangle
	\begin{align}\label{eq:les}
		\RHom_X(\fFF, \gGG) \to \RHom_X(\mathcal{K}, \gGG) \to \RHom_X(\gGG, \gGG)[1] \to \RHom_X(\fFF, \gGG)[1]
	\end{align}
	in $D(X)$.
	Since $\dquot_\fFF \to \rpsmor(\fFF, -)$ is an open immersion, we may compute the tangent complex in the latter stack instead. Recall the homotopy fiber sequence (\ref{eq:targettriangle}) of tangent complexes:
	\begin{align*}
		\TT_{\rmor(\fFF, \gGG), x} \to \TT_{\rpsmor(\fFF, -), x} \xrightarrow{t} \TT_{\rpsperf(X), x}
	\end{align*}
	This gives an exact triangle in $D(k)$:
	\[
	\TT_{\rmor(\fFF, \gGG), x} \to \TT_{\rpsmor(\fFF, -), x} \xrightarrow{t} \TT_{\rpsperf(X), x} \to \TT_{\rmor(\fFF, \gGG), x}[1]
	\]
	Using the characterizations of tangent complexes from \cite{tv}, this is the triangle
	\[ \RHom_X(\fFF, \gGG) \to \TT_{\dquot_\fFF, x} \to \RHom_X(\gGG, \gGG)[1] \to \RHom_X(\fFF, \gGG)[1] \]
	The rightmost map is given by composition with $\phi$, so comparison with (\ref{eq:les}) and the uniqueness of the dg-cone construction (\cite[Proposition 4.3]{thesisdgcats}; also \cite[Remark 1.1.1.7]{HA}) gives us the isomorphism
	\[ \TT_{\dquot_\fFF, x} \simeq \RHom_X(\mathcal{K}, \gGG) \]
	in $D(k)$. This proves Part (3) of Theorem \ref{maintheorem} and concludes the proof of the theorem.

	\subsection{Stratification}
	For $X$ a smooth quasiprojective variety and $\eEE$ a coherent sheaf on $X$, there is a stratification (which depends on the choice of very ample line bundle on $X$ which we suppress for now)
	\[ Quot_\eEE = \coprod_{\Psi \in \QQ[t]}  Quot^\Psi_\eEE \]
	where the $\Psi$'s are the Hilbert polynomials of the quotients. Using Proposition \ref{prop:stv}, we get unique derived extensions of each of the components in the decomposition above and we have a stratification
	\[ \dquot_\eEE = \coprod_{\Psi \in \QQ[t]}  \dquot_\eEE^\Psi \]
	From the description of the map $\phi_{\bf F}$ in the proposition, we see that $\dquot_\eEE^\Psi$ is the stack whose $A$-points are the maps $\eEE_{\bf A} \to \gGG$ in $\dquot_\eEE(A)$ such that $\hHH^0(\gGG) \simeq \gGG_0 = \gGG \otimes^\LL_{\oOO_{\dspec A}} \oOO_{\spec H^0(A)}$ has Hilbert polynomial $\Psi$ when restricted to a fiber of the projection $X \times \spec H^0(A) \to \spec H^0(A)$. More precisely (as in the classical case), this is the Hilbert polynomial of an extension of $\gGG_0$ to a projective completion of the fiber. This definition will be useful in the next section when we address derived quot schemes of points.
	
	\section{Quot scheme of points}
	Denote by $\dquot^n_X$ the derived Quot scheme $\dquot_{\oOO_X}^P$ where $P$ is the constant polynomial $n$.
	\begin{eg}\label{eg:quotn}
		Let $R = k[x_1, \ldots, x_m]/(f_1, \ldots, f_r)$ be a complete intersection $k$-algebra such that $X = \spec R$ is a nonsingular affine variety. The derived quot functor $\dquot^n_X$ assigns (by definition) to a cdga $A$ the $\infty$-groupoid whose objects are maps $R \otimes^\LL A \to Q$ such that the induced map $R \otimes H^0(A) \simeq H^0(R \otimes^\LL A) \to H^0(Q)$ is surjective, where $Q$ is an $R \otimes^\LL A$-dg-module, flat over $A$ and such that the $H^0(A)$-module $H^0(Q)$ is projective of rank $n$. These last two conditions, along with \cite[Proposition 2.22(6)]{tv}, imply that $Q$ itself is a projective $A$-module of rank $n$.
		Working étale locally on $A$ if needed, we can mimic the proof of that proposition to lift the isomorphism $H^0(A)^n \to H^0(Q)$ (since $R \otimes H^0(A) \to H^0(Q)$ is a point in $Quot^n_X(H^0(A)) \simeq Hilb^n_X(H^0(A))$) to a quasi-isomorphism $A^n \simeq Q$.
		Thus we conclude that $\dquot^n_X(A)$ is étale locally a quotient of the simplicial subset of
		\[ Map_{dga}(R, End_A(A^n)) \times A^n \]
		generated by those 0-simplices such that the induced map is surjective. The mapping space in the above product can be computed by taking a cofibrant replacement of $R$ in the Dwyer-Kan model structure on the category of dg-categories (first introduced in \cite{hinich}), so we have
		\begin{align*}
			Map_{dga}(R, End_A(A^n)) &\simeq Hom_{dga}(\cofib R, \gl n \otimes A)\\
			&\simeq Hom_{cdga}(\cofib R_n, A)
		\end{align*}
		where $\cofib{R}$ is the semi-free dga resolution of $R$ with generators
		\begin{align*}
			x_i \qquad &\text{in degree 0, with } dx_i = 0\\
			a_{ij}, s_i \qquad &\text{in degree -1, with } da_{ij} = [x_i, x_j], ds_i = f_i(x_1, \ldots, x_m)\\
			b_{n_j} \qquad &\text{in degree }j, j < -1
		\end{align*}
		where the differentials of the $b_{n_j}$'s are expressions involving the generators of higher degree and the partial derivatives $\partial f_i/\partial x_j$ (we provide an example of such a differential in Example \ref{eg:fermat}). Here a choice of ordering on the monomials of the $f_i$'s has been made, but is ultimately irrelevant (up to quasi-isomorphism) since, given a different ordering, the corresponding permutation can be used to construct a quasi-isomorphism.
		Then, as in \cite{berest}, $\cofib{R}_n$ is the semi-free cdga generated by the entries of the $n \times n$ matrices
		\begin{align*}
			X_i \qquad &\text{in degree 0, with } dX_i^{\mu\nu} = 0\\
			A_{ij}, S_i \qquad &\text{in degree -1, with } dA_{ij}^{\mu\nu} = [X_i, X_j]^{\mu\nu}, dS_i^{\mu\nu} = f_i(X_1, \ldots, X_m)^{\mu\nu}\\
			B_{n_j} \qquad &\text{ in degree } j, j <-2
		\end{align*}
		
		Since $A^n \simeq Hom_{cdga}(k[y_1, \ldots, y_n], A)$, we have that
		\[  Map_{dga}(R, End_A(A^n)) \times A^n \simeq Hom_{cdga}(\cofib{R}_n \otimes^\LL k[y_1, \ldots, y_n], A) \]
		
		Next we briefly recall the construction of $Hilb^n(X)$ as a quotient of the scheme of commuting matrices (using, for example, \cite{hennijardin} as reference). Define the variety
		\[ V(R, n) := \left\{\ \left(X_1, \ldots, X_m, v\right) \in \gl{n}^m \times \CC^n\ \middle|\ [X_i, X_j] = 0, f_i(X_1, \ldots, X_m) = 0\  \right\} \]
		and let $V(R, n)^{st} \subset V(R, n)$ be the open subset consisting of those points $(X_1, \ldots, X_m, v)$ such that the induced map
		\begin{align*}
			R &\to \CC^n\\
			x_i &\mapsto X_i\; v
		\end{align*}
		is surjective. The $G := GL_n$ action on $V(R, n)^{st}$ given by
		\[ g\cdot(X_1, \ldots, X_m, v) = (gX_1g^{-1},\ldots, gX_mg^{-1}, gv) \]
		makes $\mMM(R, n) := V(R, n)^{st}/G$ a good quotient and we have an isomorphism
		\[ Hilb^n(X) \simeq \mMM(R, n). \]
		Now we set
		\[ {\bf V}(R, n) := \dspec (\cofib{R}_n \otimes^\LL k[y_1, \ldots, y_n]) = \dspec(\cofib{R}_n) \times \CC^n \]
		Then it follows that $t_0({\bf V}(R, n)) \simeq V(R, n)$.
		
		Next we define ${\bf V}(R, n)^{st}$ to be the derived subscheme of ${\bf V}(R, n)$ consisting of the 0-simplices in ${\bf V}(R, n)(A)$ for which the induced map $R \otimes H^0(A) \to H^0(A^n)$ is surjective. But these are precisely the 0-simplices for which the induced map in $V(R, n)(H^0(A))$ lies in $V(R, n)^{st}(H^0(A))$. In other words, using the notation of Proposition \ref{prop:stv},
		\[ \phi_{{\bf V}(R, n)} (V(R, n)^{st})  \simeq {\bf V}(R, n)^{st} \]
		The action of $G$ lifts to an action on ${\bf V}(R, n)^{st}$ and we can construct the quotient derived stack $[{\bf V}(R, n)^{st}/G]$. The fiber of the map
		\begin{align*}
			{\bf V}(R, n)^{st}(A) &\to \dquot^n(X)(A)\\
			(R \otimes^\LL A \to A^n) &\mapsto (\oOO_{\dspec (R\otimes^\LL A)} \to \oOO_{\dspec A}^n)
		\end{align*}
		consists of the automorphisms of $\oOO_{\dspec (R\otimes^\LL A)} \to \oOO_{\dspec A}^n$ in the undercategory $Perf({\bf X_A})_{\oOO_{{\bf X}_A/}}^{\leq 0, \simeq}$.
		\begin{prop}
			There is an equivalence of derived stacks
			\[ [{\bf V}(R, n)^{st}/G] \simeq \dquot^n(X) \]
		\end{prop}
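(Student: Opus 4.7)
The plan is to exhibit ${\bf V}(R,n)^{st} \to \dquot^n(X)$ as a $G$-torsor (étale-locally trivial), so that passing to the homotopy quotient yields the claimed equivalence of stacks. I would organize this into three steps.

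\emph{Step 1: $G$-invariance.} The $G$-action $(X_1,\ldots,X_m,v) \mapsto (gX_1g^{-1},\ldots,gX_mg^{-1},gv)$ is change of basis on $A^n$, so the induced surjection $R\otimes^\LL A \to A^n$ is replaced by an equivalent object of the undercategory $Perf({\bf X_A})_{\oOO_{\bf X_A}/}^{\leq 0,\simeq}$, with the equivalence supplied by $g: A^n \to A^n$. Hence the forgetful map descends to a morphism $\pi: [{\bf V}(R,n)^{st}/G] \to \dquot^n(X)$.

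\emph{Step 2: Essential surjectivity.} This is already carried out in the discussion preceding the proposition: given any $A$-point $(R\otimes^\LL A \to Q)$ of $\dquot^n(X)$, the flatness of $Q$ over $A$ combined with the projectivity of $H^0(Q)$ over $H^0(A)$ produces, étale locally on $A$, a quasi-isomorphism $Q \simeq A^n$ obtained by lifting an isomorphism on $H^0$ in the manner of \cite[Proposition 2.22]{tv}. The composite $R\otimes^\LL A \to A^n$ then gives the cyclic vector (as the image of $1$) and the commuting matrices $X_i$ (from the induced $R$-action on $A^n$), producing the required lift to ${\bf V}(R,n)^{st}(A)$.

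\emph{Step 3: Fibers are $G$-torsors.} Fixing an $A$-point $(\oOO_{\bf X_A} \to Q)$ of $\dquot^n(X)$, I would identify the fiber of ${\bf V}(R,n)^{st}(A) \to \dquot^n(X)(A)$ as the space of trivializations $Q \simeq A^n$ compatible with the structure map. This is a torsor under the derived group of self-equivalences of $A^n$ in $Perf({\bf X_A})_{\oOO_{\bf X_A}/}^{\leq 0,\simeq}$. Using that $\text{Map}_A(A^n, A^n) \simeq M_n(A)$ and that self-equivalences are precisely the connected components lying over $GL_n(H^0(A)) \subset M_n(H^0(A))$, this group is exactly $G(A) = GL_n(A)$. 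Combined with Step 2, this exhibits ${\bf V}(R,n)^{st} \to \dquot^n(X)$ as a $G$-torsor, so $\pi$ is an equivalence.

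The main obstacle will be Step 3, specifically the rigorous identification of the higher automorphism space in the relevant undercategory with the derived group scheme $G$. Ruling out exotic higher homotopies reduces (much as in the $\pi_i$-vanishing argument used earlier for $t_0(\dquot_\fFF)$) to the heart-computation for mapping spaces between flat finite-rank perfect complexes, forcing the relevant simplicial set of self-equivalences to be a $1$-groupoid of invertible matrices; the compatibility in the slice category then contributes no further homotopy.
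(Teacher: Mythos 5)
Your proposal takes essentially the same strategy as the paper: both exhibit ${\bf V}(R,n)^{st} \to \dquot^n(X)$ as a $G$-torsor and pass to the homotopy quotient, with $\pi_0$-surjectivity coming from the \'etale-local freeness of projective modules. The difference is in how the torsor condition is verified. The paper invokes \cite[Lemma 1.3.4.3]{hag2} to present $\dquot^n(X)$ as the colimit of the homotopy (\v Cech) nerve of the map ${\bf V}(R,n)^{st}\to\dquot^n(X)$, presents $[{\bf V}(R,n)^{st}/G]$ as the colimit of $B({\bf V}(R,n)^{st},G)$, and then uses the Segal groupoid condition to reduce to the single explicit equivalence ${\bf V}(R,n)^{st}\times G \simeq {\bf V}(R,n)^{st}\times^h_{\dquot^n(X)}{\bf V}(R,n)^{st}$, realized by $(\phi, M)\mapsto(\phi, M\circ\phi)$. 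This avoids having to pinpoint any stabilizer group. You instead describe the fibers as trivialization spaces and try to identify the torsor group directly, which is where the care is required.

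One imprecision in your Step 3 needs repair: you describe the torsor group as the ``self-equivalences of $A^n$ in $Perf({\bf X_A})_{\oOO_{\bf X_A}/}^{\leq 0,\simeq}$.'' But $A^n$ by itself is not an object of this undercategory, and automorphisms of the actual undercategory object $(\oOO_{\bf X_A}\to A^n)$ are constrained to commute with the structure map; by the same injectivity argument used to prove $\pi_i(\dquot_\fFF(A),x)=0$ for $A$ discrete, this group is generally much smaller than $GL_n(A)$, so taking the undercategory literally would give the wrong (too small, possibly trivial) group and the torsor identification would collapse. The computation you then perform, $\text{Map}_A(A^n,A^n)\simeq M_n(A)$ with $GL_n(A)$ the invertible components, is taken in $Perf(\dspec A)$, not the undercategory---and that is the correct group. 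If you retain the fiberwise formulation, state that the space of trivializations $Q\simeq A^n$ is a torsor under $\text{Aut}_{Perf(\dspec A)}(A^n)\simeq GL_n(A)$; alternatively, you can sidestep the fiberwise identification altogether by writing down the paper's explicit equivalence on the degree-one part of the Segal groupoid.
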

		\begin{proof}
			The induced map on homotopy sheaves
			\[ \pi_0\left({\bf V}(R, n)^{st}\right) \to \pi_0\left(\dquot^n(X)\right) \]
			is an epimorphism (of sheaves of sets) since étale locally any projective module is free. Therefore, by \cite[Lemma 1.3.4.3]{hag2}, $\dquot^n(X)$ is equivalent to the homotopy nerve of the map ${\bf V}(R, n)^{st} \to \dquot^n(X)$. On the other hand, by \cite[\S 3]{dag-ems}, for a group action of $G$ on a derived scheme $\bf Y$ the quotient stack $[{\bf Y}/G]$ is the homotopy colimit of the simplicial object $B({\bf Y}, G)$ that in degree $k$ is given by ${\bf Y}\times G^k$. Since both simplicial objects under consideration are Segal groupoids, it suffices to construct an equivalence
			\[ {\bf V}(R, n)^{st} \times G \simeq {\bf V}(R, n)^{st}\times^h_{\dquot^n(X)} {\bf V}(R, n)^{st}. \]
			Given a cdga $A$, $({\bf V}(R, n)^{st} \times G)(A)$ is a product of the stable locus of $Map_{dga}(R, End_A(A^n)) \times A^n$ with $GL_n(A)$, while $({\bf V}(R, n)^{st}\times^h_{\dquot^n(X)} {\bf V}(R, n)^{st})(A)$ is the simplicial subset of ${\bf V}(R, n)^{st}(A)\times^h {\bf V}(R, n)^{st}(A)$ generated by 0-simplices $(R \otimes^\LL A \xrightarrow{\phi} A^n, R \otimes^\LL A \xrightarrow{\psi} A^n)$ for which $\phi$ and $\psi$ map to equivalent objects in $\dquot^n(A)$. Thus the map
			\begin{align*}
				({\bf V}(R, n)^{st} \times G)(A) &\to ({\bf V}(R, n)^{st}\times^h_{\dquot^n(X)} {\bf V}(R, n)^{st})(A)\\
				(R \otimes^\LL A \xrightarrow{\phi} A^n, M) &\mapsto (R \otimes^\LL A \xrightarrow{\phi} A^n, R \otimes^\LL A \xrightarrow{M\circ\phi} A^n)
			\end{align*}
			achieves the desired equivalence.
		\end{proof}
	\end{eg}
	Thus the derived quot scheme of points has a presentation as a group quotient which is just a derived enhancement of the well-known one for the classical quot scheme of points.
	
	\section{Shifted symplectic structure}
	In this section we describe a shifted form on the derived quot scheme and conclude with an explicit calculation.
	
	\subsection{Background}
	First we recall a few notions from \cite{ptvv}, using the descriptions in \cite{bbbbj}. Let $\bf X$ be a locally geometric derived stack. Then $\bf X$ has a global cotangent complex $\LL_{\bf X} \in Perf(\bf X)$, with dual the tangent complex $\TT_{\bf X}$. One can define the exterior powers $\Lambda^p\LL_{\bf X}$ for $p = 0, 1, \ldots$. Regard $\Lambda^p\LL_{\bf X}$ as a complex, with differential $d$:
	\[ \cdots \xrightarrow{d} \left(\Lambda^p\LL_{\bf X}\right)^{k-1} \xrightarrow{d} \left(\Lambda^p\LL_{\bf X}\right)^{k} \xrightarrow{d} \left(\Lambda^p\LL_{\bf X}\right)^{k+1} \xrightarrow{d} \cdots \]
	There are de Rham differentials $d_{dR}:\Lambda^p\LL_{\bf X} \to \Lambda^{p+1}\LL_{\bf X}$ with $d_{dR}\circ d_{dR} = d\circ d_{dR} + d_{dR}\circ d = 0$.
	\begin{defn}
		A \textbf{$k$-shifted $p$-form} on $\bf X$ is an element $\omega^0 \in \left(\Lambda^p\LL_{\bf X}\right)^{k}$ with $d\omega^0 = 0$. For $p = 2$, this induces by adjunction a map $\omega^0:\TT_{\bf X} \to \LL_{\bf X}[k]$.
		
		A \textbf{$k$-shifted closed $p$-form} on $\bf X$ is a sequence
		\[ \omega = \left(\omega^0, \omega^1, \ldots \right), \qquad \omega^i \in \left(\Lambda^{p+i}\LL_{\bf X}\right)^{k-i} \]
		with $d\omega^0 = 0$ and $d_{dR}\omega^i + d\omega^{i+1} = 0$ for $i \geq 0$.
		
		A \textbf{$k$-shifted symplectic structure} on $\bf X$ is a $k$-shifted closed 2-form $\left(\omega^0, \omega^1, \ldots \right)$ on $\bf X$ whose induced morphism $\omega^0:\TT_{\bf X} \to \LL_{\bf X}[k]$ is an equivalence.
	\end{defn}
	The main theorem about shifted symplectic structures that we will need is a consequence of \cite[Theorem 2.5]{ptvv}. Let $X$ be a smooth and proper Calabi-Yau $d$-fold equipped with a trivialization $\omega_X \simeq \oOO_X$.
	\begin{thm}[\cite{ptvv}]
		The derived stack of perfect complexes $\rperf(X)$ on $X$ is canonically endowed with a $(2-d)$-shifted symplectic structure.
	\end{thm}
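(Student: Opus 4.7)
The plan is to follow the AKSZ-type transgression strategy of Pantev–Toën–Vaquié–Vezzosi. First I would construct a canonical $2$-shifted symplectic structure on the absolute stack $\rperf$, then package the Calabi–Yau trivialization $\omega_X \simeq \oOO_X$ into a $d$-orientation of $X$, and finally transgress the former along the evaluation map to produce the desired $(2-d)$-shifted symplectic form on $\rperf(X) = \rmap(X, \rperf)$.

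Step one is to identify the tangent complex of $\rperf$ at a point $E$ as $\TT_{\rperf, E} \simeq \REnd(E)[1]$, and then to define the underlying $2$-form as the shifted trace pairing $\REnd(E)[1] \otimes \REnd(E)[1] \xrightarrow{\circ} \REnd(E)[2] \xrightarrow{\mathrm{tr}} k[2]$. To globalize this pointwise pairing and upgrade it to a closed $2$-form one uses the Chern-character / trace morphism from the category of perfect complexes into negative cyclic homology; the image of the identity endomorphism under this trace supplies the required tower $(\omega^0, \omega^1, \dots)$, with non-degeneracy visible from the fact that the composition pairing on $\REnd(E)$ is perfect.

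Step two is to repackage the Calabi–Yau data as an orientation of degree $d$. Properness of $X$ guarantees that the complex $\mathrm{R}\Gamma(X, \oOO_X)$ is perfect, and the trivialization $\omega_X \simeq \oOO_X$ together with Serre duality produces a nondegenerate trace map $\int_X \colon \mathrm{R}\Gamma(X, \oOO_X) \to k[-d]$. The orientation axiom amounts to the statement that for every perfect $F$ on $X$ the composition $\mathrm{R}\Gamma(X, F) \otimes \mathrm{R}\Gamma(X, F^\vee) \to \mathrm{R}\Gamma(X, \oOO_X) \to k[-d]$ is a perfect pairing, which is exactly Serre duality for smooth proper $X$ with trivial canonical bundle.

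Step three is the transgression. Let $\mathrm{ev} \colon X \times \rperf(X) \to \rperf$ be the evaluation map and $\pi \colon X \times \rperf(X) \to \rperf(X)$ the projection. Pulling back the closed $2$-form gives $\mathrm{ev}^*\omega$ on $X \times \rperf(X)$, and integration along $X$ using the orientation $\int_X$ yields a closed $(2-d)$-shifted $2$-form $\pi_*\mathrm{ev}^*\omega$ on $\rperf(X)$; closedness is formal because $\mathrm{ev}^*$ and $\pi_*$ both commute with the mixed differential in the Tate/negative cyclic complex computing closed forms. The main obstacle is nondegeneracy: at a point $E \in \rperf(X)$ one has $\TT_{\rperf(X), E} \simeq \RHom_X(E, E)[1]$, and unwinding the construction shows that the induced map $\TT_{\rperf(X), E} \to \LL_{\rperf(X), E}[2-d]$ is, up to shift, precisely the Serre duality pairing on $\RHom_X(E, E)$ composed with the trace. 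Hence nondegeneracy of the transgressed form reduces to Serre duality on the Calabi–Yau $X$, completing the proof.
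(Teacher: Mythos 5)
The paper does not prove this statement; it is quoted verbatim (with attribution) from \cite{ptvv}, so there is no in-paper argument to compare against. Your sketch is a faithful and essentially complete outline of the PTVV proof strategy itself: the $2$-shifted symplectic form on $\rperf$ coming from the shifted trace pairing on $\REnd(E)[1]$ and globalized via the Chern character to negative cyclic homology; the $d$-orientation on $X$ extracted from the Calabi--Yau trivialization, properness, and Serre duality; and the AKSZ transgression $\pi_*\,\mathrm{ev}^*$ along the evaluation map, with nondegeneracy of the transgressed form reducing to Serre duality on $X$. The only places where you are slightly loose are standard ones in any short summary: the closed $2$-form on $\rperf$ is more precisely packaged via the Chern character of the universal perfect complex rather than "the image of the identity endomorphism," and one should note that $\rperf$ is a locally geometric Artin stack locally of finite presentation so that a perfect cotangent complex exists in the first place. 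Neither affects the correctness of the argument.
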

	Since we work in the slightly more general setting of quasiprojective varieties, we will also need the following theorem:
	\begin{thm}[\cite{bravdyck}]
		Let $X$ be a finite type Gorenstein scheme of dimension $d$ with a trivialization $\omega_X \simeq \oOO_X$ of its canonical bundle. Then the derived stack $\rpsperf(X)$ of perfect complexes with proper support on $X$ has an induced $(2-d)$-shifted symplectic structure.
	\end{thm}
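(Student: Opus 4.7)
The plan is to mirror the PTVV construction for proper Calabi--Yau targets, with integration against the fundamental class of $X$ replaced by a Grothendieck duality trace localized on the proper support of each perfect complex. The proof breaks into three stages: construct the underlying $(2-d)$-shifted $2$-form, verify non-degeneracy, and verify closedness.

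For the underlying $2$-form, at an $A$-point of $\rpsperf(X)$ corresponding to $\eEE \in PsPerf({\bf X_A})$, the tangent complex is $\RHom_{{\bf X_A}}(\eEE, \eEE)[1]$ by standard deformation theory. I would define the pairing as the composition $\RHom(\eEE, \eEE) \otimes \RHom(\eEE, \eEE) \to \RHom(\eEE, \eEE)$ followed by the categorical trace $\RHom(\eEE, \eEE) \to \RR q_* \oOO_{{\bf X_A}}$, where $q:{\bf X_A} \to \dspec A$ is the projection, and finally a Gorenstein trace $\RR q_* \oOO_{{\bf X_A}} \to \oOO_{\dspec A}[-d]$ defined on the proper support of $\eEE$ using the trivialization $\omega_X \simeq \oOO_X$ and Grothendieck--Serre duality. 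Non-degeneracy of the induced map $\omega^0:\TT \to \LL[2-d]$ then reduces to Grothendieck--Serre duality for a Gorenstein scheme, namely $\RHom(\eEE, \eEE) \simeq \RHom(\eEE, \eEE \otimes \omega_X)^\vee[-d] \simeq \RHom(\eEE, \eEE)^\vee[-d]$, where properness of support makes the duality valid despite non-properness of $X$ itself.

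The main obstacle is upgrading these pointwise data to a global closed form on the stack. I would argue this in one of two ways. The direct approach is to imitate PTVV by constructing $\omega^0$ and its higher components from the universal perfect complex on $\rpsperf(X) \times X$ via its derived Atiyah/Chern character, then ``integrating along $X$'' using the proper support. The more conceptual route, presumably the one taken in \cite{bravdyck}, is to observe that the Gorenstein trivialization together with the properness-of-support restriction endows the dg-category $PsPerf(X)$ with an absolute $d$-Calabi--Yau structure in the sense of Kontsevich--Soibelman, and then invoke the general theorem that an $n$-Calabi--Yau structure on a smooth dg-category $\cCC$ induces a canonical $(2-n)$-shifted symplectic structure on the moduli stack of objects of $\cCC$; exhibiting the Calabi--Yau structure reduces to checking that the trace pairing lifts to a negative cyclic class, which follows via Hochschild--Kostant--Rosenberg from the lift supplied by $\omega_X \simeq \oOO_X$. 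Producing the pointwise pairing and checking non-degeneracy are essentially formal once Grothendieck duality is in hand, but controlling the full de Rham tower $\omega = (\omega^0, \omega^1, \ldots)$, i.e.\ the closedness condition, requires the Calabi--Yau dg-category machinery of \cite{bravdyck}; the subtle point is the interaction of the absolute Calabi--Yau condition with the proper-support constraint.
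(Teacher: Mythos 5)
The paper does not prove this theorem: it is imported verbatim from Brav--Dyckerhoff and used as a black box, with the only surrounding remark being the identification of the induced pairing on tangent complexes with the trace form $A \wedge B \mapsto \mathrm{Tr}(A\circ B)$. There is therefore no internal argument in the paper against which to compare your sketch.

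Taken on its own, your outline is reasonable and correctly guesses the conceptual route of the cited reference: a Calabi--Yau structure on a dg-category inducing a shifted symplectic structure on its derived moduli of objects. One point, however, deserves care. The Calabi--Yau structure that drives the Brav--Dyckerhoff theorem lives on the \emph{smooth} dg-category $Perf(X)$, not on $PsPerf(X)$ as you write. For $X$ smooth and of finite type, $Perf(X)$ is smooth as a dg-category whether or not $X$ is proper, and the trivialization $\omega_X \simeq \oOO_X$ together with the fundamental class supplies a left (smooth) $d$-Calabi--Yau structure on it, i.e.\ a suitable class in negative cyclic homology of $Perf(X)$. The To\"en--Vaqui\'e moduli of objects of $Perf(X)$ classifies pseudo-perfect $Perf(X)^{\mathrm{op}}$-modules, which by the lemma recalled earlier in the paper are exactly perfect complexes with proper support on $X$; that is how $\rpsperf(X)$ enters. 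So the proper-support constraint shows up on the side of the objects being classified, not as part of the Calabi--Yau datum on the category. With this correction your remaining points --- non-degeneracy via Grothendieck--Serre duality made valid by properness of support, and closedness via a negative cyclic lift of the trace pairing through Hochschild--Kostant--Rosenberg --- are consistent with the Brav--Dyckerhoff argument.
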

	We note that the former is a special case of the latter, and in both cases the shifted symplectic structure
	\[ \TT_{\rpsperf(X)} \wedge \TT_{\rpsperf(X)} \to k[2-d] \]
	is given by the trace map
	\begin{align*}
		\RR\Hom_X(\gGG, \gGG)[1] \wedge \RR\Hom_X(\gGG, \gGG)[1] &\to k[2-d]\\
		A \wedge B &\mapsto Tr(A\circ B)
	\end{align*}
	
	\subsection{Pullback of the shifted symplectic form to the derived quot scheme}
	Let $X$ be a smooth quasiprojective Calabi-Yau $d$-fold and $\fFF$ a coherent sheaf on $X$. By the results of this section, the derived stack $\rpsperf(X)$ has a canonical $(2-d)$-shifted symplectic structure. Using the map
	\[ \dquot_\fFF \to \rpsperf(X) \]
	we can pull back this shifted symplectic structure to get a closed $(2-d)$-shifted 2-form on $\dquot_\fFF$.

	\begin{eg}\label{eg:fermat}
		Let $f(w,x,y,z) = w^5 + x^5 + y^5 + z^5 + 1$ and 
		\[ R := k[w,x,y,z]/(f) \]
		Recall that the Fermat quintic $Y$ is defined to be the vanishing locus of
		\[ g(x_0, \ldots, x_4) = x_0^5 + \ldots + x_4^5 \]
		in $\PP^4$. Consequently, $X := \spec R$ is the open subscheme of $Y$ defined by $x_0 \neq 0$, with the change of variables
		\[w = x_1/x_0 \qquad x = x_2/x_0 \qquad y = x_3/x_0 \qquad z = x_4/x_0. \]
		Using $d$ to mean $d_{dR}$ in the rest of this paragraph for convenience, the Calabi-Yau form $\Omega$ on Y is defined as
		\[ \Omega = Res\left(\frac{\Theta}{g}\right), \qquad \text{ where } \Theta = \sum_{i=0}^4 (-1)^ix_idx_0\wedge\ldots\wedge \widehat{dx_i} \wedge \ldots \wedge dx_4\]
		One may verify that
		\[ \Theta\vert_Y = dg \wedge \alpha, \qquad \text{ where } \alpha = \frac{\sum_{i=1}^4 (-1)^ix_idx_1\wedge\ldots\wedge \widehat{dx_i} \wedge \ldots \wedge dx_4}{\partial g / \partial x_0},\]
		so that on the open set where $\partial g / \partial x_0 = 5x_0^4 \neq 0$, $\Omega = \alpha$ (up to a scalar multiple) since $\alpha$ is holomorphic. Applying our change of variables, we may assume that the Calabi-Yau form on $X$ is given by
		\[ wdx\wedge dy\wedge dz -xdw\wedge dy\wedge dz + ydw\wedge dx\wedge dz - zdw\wedge dx\wedge dy\]
		which is a constant multiple (reusing $\Omega$ by abuse of notation) of
		\begin{align*}
			\Omega &= \left(wdx - xdw\right)\wedge dy\wedge dz + \left(wdy - ydw\right)\wedge dz\wedge dx + \left(wdz - zdw\right)\wedge dx\wedge dy\\
			&+ \left(ydz - zdy\right)\wedge dw\wedge dx + \left(zdx - xdz\right)\wedge dw\wedge dy + \left(xdy - ydx\right)\wedge dw\wedge dz.
		\end{align*}
		
		We resume using $d$ for cdga differentials and $d_{dR}$ for the de Rham differential. Let $D = k\langle w, x, y, z\rangle$ be the free associative unital algebra on four generators. The semi-free dga resolution $\cofib R$ is, as in Example \ref{eg:quotn}, generated by
		\begin{align*}
			\ldots \to D\langle v_w, \ldots, t_w, \ldots \rangle \to &D\langle u_{wx}, \ldots, s\rangle \to &D\\
			&u_{wx} \mapsto &[w,x]\\
			&s \mapsto &f\\
			v_w \mapsto &[x, u_{yz}] + [y, u_{zx}] + [z, u_{xy}]\\
			t_w \mapsto &[w,s] + \sum_{i=0}^4 \left(x^i u_{wx}x^{4-i} + y^i u_{wy}y^{4-i} + z^i u_{wz}z^{4-i}\right)
		\end{align*}
		Then the cdga $\cofib R_n$ is generated by the entries of the $n \times n$ matrices
		\begin{align*}
			W, X, Y, Z \qquad &\text{in degree 0, with } dW^{\mu\nu} = \ldots = 0\\
			U_{wx}, \ldots, S \qquad &\text{in degree -1, with } dU_{wx}^{\mu\nu} = [W, X]^{\mu\nu}, \ldots, dS^{\mu\nu} = (W^5+X^5+Y^5+Z^5 + I)^{\mu\nu}\\
			V_{w}, \ldots, T_w, \ldots &\text{in degree -2, with } dV_w^{\mu\nu} = [X, U_{yz}] + [Y, U_{zx}] + [Z, U_{xy}],\ldots,\\
			& dT_w^{\mu\nu} = [W,S]^{\mu\nu} + \sum_{i=0}^4 \left(X^i U_{wx}X^{4-i} + Y^i U_{wy}Y^{4-i} + Z^i U_{wz}Z^{4-i}\right)^{\mu\nu}, \ldots\\
			B_{n_j} \qquad &\text{ in degree } j, j <-2
		\end{align*}
		with
		\[ \spec H^0(\cofib R_n) \simeq V(R, n) \]
		We have the derived open subscheme ${\bf V} (R, n)^{st} \to {\bf V}(R, n)$ with the map ${\bf V} (R, n)^{st} \to \dquot^n_X$ that realizes the latter as a quotient of the former by a group action. Under this map we can pull back the closed shifted form to ${\bf V} (R, n)^{st}$.
		We now describe a closed $(-1)$-shifted 2-form on ${\bf E} := \dspec(\cofib{R}_n)$.
		Let
		\begin{align*}
			\Phi &= \left(Wd_{dR}X - Xd_{dR}W\right)U_{yz} + \left(Wd_{dR}Y - Yd_{dR}W\right)U_{zx} + \left(Wd_{dR}Z - Zd_{dR}W\right)U_{xy}\\
			&+ \left(Yd_{dR}Z - Zd_{dR}Y\right)U_{wx} + \left(Zd_{dR}X - Xd_{dR}Z\right)U_{wy} + \left(Xd_{dR}Y - Yd_{dR}X\right)U_{wz}
		\end{align*}
		and
		\begin{align*}
			\phi &:= tr(\Phi)\\
			\omega^0 &:= d_{dR}\phi\\
			\omega &:= (\omega^0, 0, 0, \ldots)
		\end{align*}
		Then, using the cyclic property of the trace and the identity $d\circ d_{dR} + d_{dR}\circ d = 0$, it can be verified that $d\omega^0 = 0$, so that $\omega$ is a closed $(-1)$-shifted 2-form.
	\end{eg}
	
	\begin{conj}\label{conjecture}
		The $(-1)$-shifted 2-form $\omega$ described above is equivalent to the one pulled back from $\rpsperf(X)$ under the composition
		\[ \bf E \to \dquot^n_X \to \rpsperf(X) \]
	\end{conj}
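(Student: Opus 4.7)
The plan is to make the Brav--Dyckerhoff trace form explicit on the derived chart ${\bf E}$ via the semi-free model of the universal quotient, and to recognise the resulting pullback as $d_{dR}\,\text{tr}(\Phi)$. Three ingredients enter: the trace description of the pairing on $\rpsperf(X)$, the cofibrant resolution $\cofib R_n$, and the Calabi--Yau 3-form $\Omega$ on $X$.

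First I would set up an explicit model of the universal quotient $\gGG$ over ${\bf E}\times X$. Over ${\bf E}$, this is a perfect complex in non-positive degrees representable by a Koszul-style resolution in which $W, X, Y, Z$ encode multiplication by the coordinate functions, $U_{wx},\ldots,U_{yz},S$ are the homotopies witnessing $[W_i,W_j]\simeq 0$ and $f \simeq 0$, and $V_w, T_w, B_{n_j}$ provide higher coherences. Dualising produces a concrete representative of $\RHom_X(\gGG,\gGG)$ together with a description of the classifying map $\TT_{\bf E} \to \TT_{\rpsperf(X),\gGG} = \RHom_X(\gGG,\gGG)[1]$ obtained by differentiating the matrix entries.

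Next I would compute the pullback of the pairing
\[ \RHom_X(\gGG,\gGG)[1] \wedge \RHom_X(\gGG,\gGG)[1] \to \RHom_X(\gGG,\gGG)[2] \xrightarrow{\text{tr},\,\Omega} k[-1] \]
to ${\bf E}$. In the semi-free model, composition of tangent classes becomes matrix multiplication, and the Calabi--Yau contraction integrates against $\Omega = (wdx - xdw)\wedge dy \wedge dz + \ldots$. Each summand of $\Omega$ contributes a trace in which the wedge factor, e.g.\ $dy\wedge dz$, is saturated by the homotopy $U_{yz}$ (since on ${\bf E}$ the classical vanishing $[Y,Z]=0$ is replaced by $dU_{yz} = [Y,Z]$), and the cotangent-direction factor is $W\,d_{dR}X - X\,d_{dR}W$. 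Up to cyclic rearrangement and Koszul signs the output is exactly $d_{dR}\,\text{tr}(\Phi)$. Closedness in the PTVV sense is then immediate once the pullback is recognised as $(d_{dR}\,\text{tr}(\Phi), 0, 0,\ldots)$, since $\text{tr}(\Phi)$ is a genuine primitive.

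The principal obstacle is accounting for the higher-degree generators $V_w, T_w$ and the $B_{n_j}$: their a priori contributions to the trace pairing must either vanish, cancel in cyclic pairs, or produce exact terms absorbed by the de Rham differential. I expect the identity $d\circ d_{dR} + d_{dR}\circ d = 0$ on $\cofib R_n$, together with cyclicity of the trace, to organise these contributions. The cleanest route is likely to phrase the comparison in terms of negative cyclic homology of $\cofib R_n$ as in \cite{bravdyck}, where the shifted symplectic form is induced by the Calabi--Yau map $HH_\bullet(\cofib R_n)\to k[-1]$ coming from $\Omega$; there one can hope for a conceptual matching of cyclic cocycles rather than a term-by-term check, and this is where the real content of the conjecture lies.
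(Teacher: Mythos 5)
The statement you are proving is an explicit \emph{conjecture} in the paper, not a theorem: the text immediately after it defers the question to future work, and no proof is given. There is therefore no reference argument against which your proposal can be checked. What you have written is a plan of attack rather than a proof, and you say as much yourself --- the treatment of the higher-degree generators $V_w$, $T_w$, $B_{n_j}$ rests on ``I expect'' and ``one can hope for,'' and you close by conceding that the cyclic-cocycle matching is ``where the real content of the conjecture lies.'' That is exactly the open step.

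That said, the route you sketch is reasonable and consistent with the paper's setup: the Brav--Dyckerhoff pairing is the trace form $\RR\Hom_X(\gGG,\gGG)[1] \wedge \RR\Hom_X(\gGG,\gGG)[1] \to k[-1]$, the identification $\TT_{\dquot_\fFF, x} \simeq \RHom_X(\kKK, \gGG)$ and the fiber sequence (\ref{eq:targettriangle}) give the tangent map to $\rpsperf(X)$, and reading the degree $-1$ generators $U_{yz}, U_{zx}, \dots$ as the Koszul-dual images of the $2$-form factors $dy \wedge dz$, etc.\ of $\Omega$ is precisely the heuristic behind the definition of $\Phi$. But the argument as written has concrete gaps that would need to be filled: (i) a verified identification of the composition-then-trace pairing on the semi-free model of $\gGG$ over ${\bf E}\times X$ with a matrix supertrace after contraction against $\Omega$, including signs and the passage from an integral over $X$ to an algebraic trace; (ii) a proof, not an expectation, that all contributions from $V_w, T_w, B_{n_j}$ cancel or are absorbed into $d$- and $d_{dR}$-exact terms, since equivalence of closed $2$-forms allows exactly that slack and no more; and (iii) descent along the $GL_n$-quotient ${\bf V}(R,n)^{st} \to \dquot^n_X$, which you do not address. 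The negative cyclic homology formulation is plausibly the cleanest framework, but invoking it does not by itself produce the comparison --- one would still need to exhibit the cyclic cocycle on $\cofib{R}_n$ corresponding to the Brav--Dyckerhoff class and show it equals $\text{tr}(\Phi)$ up to coboundary.
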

	Further considerations about this shifted symplectic structure will appear in future work.

	\printbibliography
	
\end{document}